\documentclass[12pt]{article}
\usepackage{amsmath,amsthm,amsfonts,latexsym,amssymb,mathrsfs,graphics,pst-all}
%\documentstyle[amscd,amssymb,verbatim,amsmath,amsthm,amsfonts,latexsym,color, makeidx, 12pt]{report}
%\nofiles
%\makeindex
%\swapnumbers

 \usepackage{footnote}
\theoremstyle{plain}
\newtheorem{theorem}{Theorem}[section]
\newtheorem{lemma}[theorem]{Lemma}
\newtheorem{corollary}[theorem]{Corollary}
\newtheorem{proposition}[theorem]{Proposition}

\theoremstyle{definition}
\newtheorem{definition}[theorem]{Definition}
\newtheorem{example}[theorem]{Example}

\numberwithin{equation}{section}

 \newcommand{\bnum}{\begin{enumerate}}
 \newcommand{\enum}{\end{enumerate}}

\begin{document}

\begin{center}{\bf \large{An Extension of Semicommutative Rings via Reflexivity }}\\
\end{center}

\begin{center}
 {\textbf{Sanjiv Subba}\\ Department of Mathematics, National Institute of Technology Meghalaya, Shillong-793003, India.\\  Email: sanjivsubba59@gmail.com\\ 
\textbf{Tikaram Subedi}\\ Department of Mathematics, National Institute of Technology Meghalaya, Shillong-793003, India. \\  Email: tikaram.subedi@nitm.ac.in}
\end{center}

\begin{small}
\textbf{Abstract}:\textit{ 
	This article introduces the notion of an NJ-reflexive ring and demonstrates that it is distinct from the concept of a reflexive ring. The class of NJ-reflexive rings contains the class of semicommutative rings, the class of left (right) quasi-duo rings, and the class of  J-clean rings but is strictly larger than these classes. Additionally, the article investigates a sufficient condition for NJ-reflexive rings to be left (right) quasi-duo, as well as some conditions for NJ-reflexive rings to be reduced. It also explores extensions of NJ-reflexive rings and notes that the NJ-reflexive property may not carry over to polynomial extensions.}

\textbf{Keywords}: Semicommutative rings, NJ-reflexive rings, Reflexive rings, Left (Right) Quasi-duo rings, J-clean rings, Polynomial extensions\\
\textbf{Mathematics Subject Classification (2010)}:  16U80, 16S34, 16S36\\

\end{small}

\section{Introduction}
\label{s1}
The concept of normal subgroups in group theory is well-known. A subgroup S of a group G is normal if and only if for any $x, y \in G$, $xy \in S$ implies that $yx \in S$.  In \cite{Th}, this property, as extended to arbitrary subsets of semi-groups and rings, was called r{\'e}flectif. This concept was further extended to an ideal of a ring by Mason in \cite{Mason}, where he introduced \textit{reflexive ideals} for the first time. A right ideal $I$ of ring $R$ is reflexive if $yRx\subseteq I$ whenever $xRy \subseteq I$, for $x, y \in R$. A ring $R$ is said to be reflexive if its zero ideal is reflexive.

\quad Recall that a ring $R$ is called   \textit{semicommutative} (\cite{sem})  if $xy=0$ implies $xRy=0$ for any $x, y\in R$ or equivalently, every left (right) annihilator of any element of $R$ is an ideal.  $R$ is  \textit{nil-semicommutative} if $xy\in N(R)$ implies $xRy\subseteq N(R)$ for $x,y\in R$;  \textit{generalized weakly symmetric } (\cite{ gwsr})  if for any $x,y,z\in R$, $xyz=0$ implies $yxz\in N(R)$. In order to study  these sort of classes of rings, reflexive ring property and semicommutative ring property  have been  studied extensively by mathematicians for several decades (see \cite{rpr}, \cite{sem}, \cite{Mason}), and they continue to be an area of active research. Kwak and Lee in \cite[Proposition 2.2]{rpr} showed a connection between semicommutativity and reflexivity. However, ``semicommutativity'' and ``reflexivity'' of a ring are two independent notions (see \cite[Example 2.3]{rpr}).

\quad Motivated by this, we introduce the concept of NJ-reflexive rings by leveraging the idea of reflexive ring property to generalize semicommutative rings. Interestingly, this class of rings also turns out to be a generalization of right (left) quasi-duo rings and J-clean rings.

\quad In this paper, $R$ represents an associative ring with unity, and all modules  are unital. We adopt the following notations: $U(R)$ for the set of all units of $R$, $Z(R)$ for the set of all central elements of $R$, $E(R)$ for the set of all idempotent elements of $R$, $J(R)$ for the Jacobson radical of $R$, $N(R)$ for the set of all nilpotent elements of $R$, $M_n(R)$ ($T_n(R))$ for the ring of all $n \times n$ matrices (upper triangular matrices) over $R$, and $N^*(R)$ for the upper nil radical of $R$. Moreover, we use the notation $E_{ij}$ for the matrix unit in $M_n(R)$ whose $(i, j)^{th}$ entry is $1$ and zero elsewhere. For any $a \in R$, $l(a)$ and $r(a)$ respectively denote the left and right annihilator of $a$, and $\langle a\rangle$  stands for the ideal generated by $a$.

\section{NJ-reflexive Rings}
This section begins with the introduction of a new class of rings called NJ-reflexive rings. It further explores some fundamental properties of NJ-reflexive rings, followed by investigations of various subclasses.

\begin{definition}
	Let $x,y\in R$. We call $R$  $NJ-reflexive$ if  $xRy\subseteq N(R)$ implies $yRx\subseteq J(R)$.  
\end{definition}
The definition of NJ-reflexive rings may prompt an investigation into the correlation between the ``reflexivity'' and ``NJ-reflexivity'' of a ring. However, these notions  are independent (see Example \ref{rem}  and Example \ref{utm}).
\begin{example}\label{rem}
	For any domain $R$, $M_n(R)$ is reflexive (by \cite[Theorem 2.6 (2)]{rpr}). Now, observe that $E_{11}M_n(R)E_{nn}\subseteq N(M_n(R))$. Note that we also have $E_{nn}M_n(R)E_{11}\not\subseteq J(M_n(R)$ since $E_{nn}E_{n1}E_{11}=E_{n1}\notin J(M_n(R))$, that is, $M_n(R)$ is not NJ-reflexive.

\end{example}

A ring $R$ is termed  \textit{directly finite} if the condition $xy = 1$ implies $yx = 1$, for $x,y\in R$.

\begin{proposition}\label{tri}
	Let $R$ be an NJ-reflexive ring. Then:
	\begin{enumerate}
		
		\item \label{nii} If $xRy\subseteq  N(R)$, then either $x\in M$ or $y\in M$ for any maximal left ideal $M$ of $R$.                         
		
		\item $R$ is directly finite.
	\end{enumerate}
\end{proposition}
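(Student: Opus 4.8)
The plan is to prove both statements by feeding a carefully chosen pair of elements into the defining implication of NJ-reflexivity and then reading off the conclusion inside $J(R)$.

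For part (1), I would start from the hypothesis $xRy\subseteq N(R)$ and immediately apply NJ-reflexivity to get $yRx\subseteq J(R)$. Since $J(R)$ is the intersection of all maximal left ideals, $yRx\subseteq M$ for every maximal left ideal $M$. Now I would argue by contradiction on a fixed $M$: if $x\notin M$, then maximality forces $M+Rx=R$, so $1=m+sx$ with $m\in M$ and $s\in R$. Multiplying on the left by $y$ gives $y=ym+ysx$, where $ym\in M$ because $M$ is a left ideal and $ysx\in yRx\subseteq M$; hence $y\in M$. This shows $x\in M$ or $y\in M$, which is exactly the claim.

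For part (2), suppose $xy=1$ and set $f=1-yx$. The first routine checks are that $f$ is idempotent and, more importantly, that $xf=0$ and $fy=0$ (both follow by expanding and using $xy=1$). The crucial observation is that these two relations make $fRx$ consist of square-zero elements: for any $r\in R$,
\[
(frx)^2=fr(xf)rx=0 .
\]
Hence $fRx\subseteq N(R)$, and NJ-reflexivity yields $xRf\subseteq J(R)$. Taking the particular product $x\cdot y\cdot f=(xy)f=f$ shows $f\in xRf\subseteq J(R)$.

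To finish, I would invoke the standard fact that $J(R)$ contains no nonzero idempotent: since $-f\in J(R)$, the element $1-f$ is a unit, and $f(1-f)=0$ forces $f=0$, i.e. $yx=1$. The main obstacle is one of discovery rather than difficulty; everything hinges on spotting that the pair $(f,x)$ is the right input for the NJ-reflexive hypothesis, so that the identity $xf=0$ converts $fRx\subseteq N(R)$ into $xRf\subseteq J(R)$ and thereby traps the idempotent $f$ inside the Jacobson radical. Once that pairing is found, the rest is immediate.
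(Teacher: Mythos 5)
Your proposal is correct and follows essentially the same route as the paper: part (1) is the identical maximal-left-ideal argument, and part (2) differs only in that you apply NJ-reflexivity to the pair $(1-yx,\,x)$ where the paper uses $(1-yx,\,yx)$ — both hinge on the same cancellation $x(1-yx)=0$ (resp.\ $yx(1-yx)=0$) to land the idempotent $1-yx$ in $J(R)$ and conclude it vanishes.
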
                                                      
\begin{proof}
	\begin{enumerate}

		\item Let $M$ be a maximal left ideal of $R$ and $xRy\subseteq  N(R)$ for $x,y\in R$. Suppose $x\notin M$. This implies that $Rx + M = R$. So, $rx + m = 1$ for some $r\in R$ and $m\in M$. Hence, we obtain $yrx + ym = y$. As $R$ is NJ-reflexive, $yrx\in  J(R)$, implying that $y\in M$.

		\item \label{df} 
		Suppose $x,y \in R$ satisfy $xy = 1$. Observe that $(1-yx)Ryx\subseteq N(R)$. Since $R$ is NJ-reflexive, we have $yxy(1-yx) = y(1-yx)\in J(R)$. Therefore, $xy(1-yx)=1-yx\in J(R)\cap E(R)$. So, $yx = 1$.
	\end{enumerate}
\end{proof}

It is easy to observe that the proof of Lemma \ref{equi} is obvious.

\begin{lemma}\label{equi}
	The following are equivalent:
	\begin{enumerate}
		\item $xyz\in N(R)$ implies $yxz\in J(R)$ for any $x,y,z\in R$.
		\item $xyz\in N(R)$ implies $xzy\in J(R)$ for any $x,y,z\in R$.
		\item $xyz\in N(R)$ implies $zyx\in J(R)$ for any $x,y,z\in R$.
	\end{enumerate}
\end{lemma}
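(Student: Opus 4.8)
The plan is to derive all three equivalences from one elementary and well-known fact about nilpotents: for any $a,b\in R$ one has $ab\in N(R)$ if and only if $ba\in N(R)$, since $(ba)^{n+1}=b(ab)^{n}a$. Applying this with the groupings $(a,b)=(x,\,yz)$ and $(a,b)=(xy,\,z)$ shows that membership of a threefold product in $N(R)$ is invariant under cyclic rotation of its factors, i.e. $xyz\in N(R)\iff yzx\in N(R)\iff zxy\in N(R)$ for all $x,y,z\in R$. This rotation invariance is the only engine I need. Crucially, I will never have to rotate or permute factors inside $J(R)$; indeed no analogue of rotation invariance holds for $J(R)$, as $E_{12}E_{11}=0\in J(M_2(k))$ while $E_{11}E_{12}=E_{12}\notin J(M_2(k))$ for a field $k$.

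With rotation invariance in hand, each implication is a one-line substitution. The three conditions apply, to a product lying in $N(R)$, the three transpositions of its factors: $(1)$ swaps the first two, $(2)$ swaps the last two, and $(3)$ reverses the triple; these transpositions are interchanged by conjugating with the cyclic rotation. For example, to prove $(1)\Rightarrow(2)$ I assume $(1)$ and take $xyz\in N(R)$; by rotation invariance $zxy\in N(R)$, so applying $(1)$ to the triple $(z,x,y)$ yields $xzy\in J(R)$, which is exactly the conclusion of $(2)$. Conversely $(2)\Rightarrow(1)$ follows by applying $(2)$ to $(y,z,x)$, using $yzx\in N(R)$, to obtain $yxz\in J(R)$.

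The equivalences involving $(3)$ go the same way: assuming $(1)$, from $yzx\in N(R)$ condition $(1)$ applied to $(y,z,x)$ gives $zyx\in J(R)$, which is $(3)$; and assuming $(3)$, from $zxy\in N(R)$ condition $(3)$ applied to $(z,x,y)$ gives $yxz\in J(R)$, which is $(1)$. Hence $(1)\Leftrightarrow(2)$ and $(1)\Leftrightarrow(3)$, so all three statements coincide. I do not foresee a genuine obstacle, which is why the statement can be called obvious; the only thing to watch is the bookkeeping. One must keep the permuted factors in the correct order and remember that it is the hypothesis side, being in $N(R)$, that carries the rotation symmetry, while the conclusion side is simply read off in $J(R)$ with no further manipulation.
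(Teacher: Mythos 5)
Your proof is correct. The paper itself gives no argument (it simply declares the lemma obvious), and the route you take --- using $(ba)^{n+1}=b(ab)^n a$ to get cyclic-rotation invariance of ``$xyz\in N(R)$'' and then reading each condition as a transposition applied to a suitably rotated triple --- is exactly the standard argument the authors presumably have in mind; all four implications you write out ($xzy$, $yxz$, $zyx$, $yxz$ landing in $J(R)$ from the rotated hypotheses $zxy$, $yzx$, $yzx$, $zxy$ in $N(R)$) check out.
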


\begin{lemma}\label{rp}
	If $R$ is a ring in which $xyz\in N(R)$ implies $yxz\in J(R)$ for $x,y,z\in R$, then $R$ is NJ-reflexive.	
\end{lemma}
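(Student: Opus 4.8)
The plan is to reduce the NJ-reflexive condition directly to the hypothesis, using Lemma \ref{equi} as the bridge. The hypothesis is precisely condition (1) of Lemma \ref{equi}, so by that lemma I may freely use the equivalent condition (3): for all $u,v,w \in R$, $uvw \in N(R)$ implies $wvu \in J(R)$. This ``order-reversing'' form is exactly the shape needed to verify NJ-reflexivity, since the defining implication $xRy \subseteq N(R) \Rightarrow yRx \subseteq J(R)$ swaps the outer factors $x$ and $y$ while keeping the middle ring element in place.

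Concretely, I would begin by assuming $xRy \subseteq N(R)$ for some $x,y \in R$, and fix an arbitrary $s \in R$; the goal is to show $ysx \in J(R)$, since as $s$ ranges over $R$ this yields $yRx \subseteq J(R)$. From $xRy \subseteq N(R)$ I immediately get $xsy \in N(R)$. Now I apply condition (3) of Lemma \ref{equi} to the triple $(x,s,y)$: reversing the order sends $xsy$ to $ysx$, so $xsy \in N(R)$ forces $ysx \in J(R)$. Since $s$ was arbitrary, $yRx \subseteq J(R)$, which is exactly the NJ-reflexive property.

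The argument is short, and there is no serious obstacle once Lemma \ref{equi} is in hand; the only point requiring care is matching the variable roles correctly so that the order reversal in condition (3) produces $ysx$ rather than $sxy$ or $xys$ (which are what conditions (1) and (2) would respectively yield). This is precisely why invoking the reversal form (3), rather than the transposition form (1) stated in the hypothesis, is the natural choice here.
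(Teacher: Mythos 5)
Your proof is correct and follows essentially the same route as the paper: both reduce the claim to Lemma \ref{equi}. The only cosmetic difference is that you apply the order-reversing form (3) directly to $xsy\in N(R)$, whereas the paper first passes from $xry\in N(R)$ to $yxr\in N(R)$ (via the fact that $ab$ is nilpotent iff $ba$ is) and then invokes the transposition form (2); your version is, if anything, slightly more direct.
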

\begin{proof}
	Let $x,y\in R$ be such that $xRy\subseteq N(R)$. So, $yxr\in N(R)$ for any $r\in R$. By Lemma \ref{equi}, $yrx\in J(R)$. Hence, $R$ is NJ-reflexive.
\end{proof}

\begin{theorem}\label{bac}
	If $R$ is a ring satisfying $xyz\in N(R)$ implies $yxz\in N(R)$ for $x,y,z\in R$, then $R$ in NJ-reflexive.
\end{theorem}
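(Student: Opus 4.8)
The plan is to argue directly in the spirit of Lemma \ref{rp}, by first upgrading the hypothesis to a full symmetry of $N(R)$ on three-fold products, and then converting membership in $N(R)$ into membership in $J(R)$ by a unit argument. I would begin by recording the elementary fact that nilpotency of a product is invariant under cyclic permutation of its factors: since $(ab)^{k+1}=a(ba)^{k}b$, we have $ab\in N(R)$ if and only if $ba\in N(R)$, so any product $c_1c_2\cdots c_m$ stays in or out of $N(R)$ under cyclic rotation of the $c_i$. Combining this cyclic invariance with the hypothesis $xyz\in N(R)\Rightarrow yxz\in N(R)$ (a transposition of the first two factors) generates the whole group $S_3$ acting on a three-fold product. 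Hence I would first prove the clean intermediate claim, the $N(R)$-level analogue of Lemma \ref{equi}: if $abc\in N(R)$, then every rearrangement of $a,b,c$ lies in $N(R)$.

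Next I would assume $xRy\subseteq N(R)$, fix $s\in R$, and aim at $ysx\in J(R)$. I would use the characterization that $w\in J(R)$ if and only if $1-tw\in U(R)$ for every $t\in R$, and establish the stronger statement that $t(ysx)\in N(R)$ for all $t$. The mechanism is as follows. Fix $t$; for any $r\in R$ the element $x\,r\,(ty)$ equals $x(rt)y$, which lies in $N(R)$ because $rt\in R$ and $xRy\subseteq N(R)$. Applying the permutation claim to the triple $(x,r,ty)$ to swap the outer factors gives $(ty)\,r\,x\in N(R)$ for all $r$, and specializing $r=s$ yields $(ty)sx=t(ysx)\in N(R)$. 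Since $1$ minus a nilpotent element is a unit, $1-t(ysx)\in U(R)$ for every $t$, whence $ysx\in J(R)$. As $s$ is arbitrary this gives $yRx\subseteq J(R)$, i.e. $R$ is NJ-reflexive.

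The main obstacle, and the reason the result is not a one-line corollary of Lemma \ref{rp}, is precisely the gap between $N(R)$ and $J(R)$: the hypothesis only returns the permuted product inside $N(R)$, and $N(R)\not\subseteq J(R)$ in general. The device that bridges the gap is the observation that I do not merely need $ysx\in N(R)$, but need all left multiples $t(ysx)$ to be nilpotent; the free middle factor in $xRy$ is exactly what lets me realize these multiples as permuted images of elements of $xRy$, after which ``unit $=1-$ nilpotent'' converts nilpotency into Jacobson-radical membership. The one technical point I would take care with is that the correct block to feed into $xRy\subseteq N(R)$ is $ty$ rather than $t$ alone, since it is the identity $x\,r\,(ty)=x(rt)y$ that makes the permutation argument close.

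I would also note in passing why the reflexive formulation is what makes this work: for a bare three-term version ``$xyz\in N(R)\Rightarrow yxz\in J(R)$'' there is no free variable to absorb $t$, whereas the hypothesis $xRy\subseteq N(R)$ supplies the whole of $R$ in the middle slot, which is exactly what the above specialization $r\mapsto rt$ exploits.
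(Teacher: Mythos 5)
Your proof is correct, but it takes a genuinely different route from the paper's. The paper first upgrades the hypothesis to the stronger three-element statement ``$xyz\in N(R)$ implies $yxz\in J(R)$'' and then invokes Lemma \ref{rp}: starting from $(xyz)^n=0$ it multiplies by $r^n$ and, through $n$ rounds of interleaving an arbitrary $r$ into the product (each round tacitly using the cyclic invariance of nilpotency together with the hypothesis), arrives at $(yxzr)^n\in N(R)$, hence $yxzr\in N(R)$ for all $r$ and so $yxz\in J(R)$. You instead bypass Lemma \ref{rp} and the telescoping induction entirely: you observe that cyclic invariance plus the hypothesized transposition of the first two factors generates all of $S_3$ on triple products, and then you exploit the free middle slot of $xRy\subseteq N(R)$ to realize every left multiple $t(ysx)$ as a rearrangement of $x(rt)y\in N(R)$, after which the criterion $w\in J(R)\iff 1-tw\in U(R)$ for all $t$ finishes. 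Your argument is shorter, avoids the induction whose individual permutation steps the paper leaves implicit, and isolates cleanly where the passage from $N(R)$ to $J(R)$ occurs; the paper's argument, on the other hand, delivers the stronger intermediate fact that $xyz\in N(R)$ already forces $yxzr\in N(R)$ for every $r$ (hence $yxz\in J(R)$), a pointwise three-element conclusion that your proof deliberately does not recover --- as you correctly note, your mechanism needs the full middle variable of $xRy$ to absorb the multiplier $t$. Both proofs share the same two engines (cyclic invariance of nilpotency and ``$1$ minus nilpotent is a unit''), but the decomposition of the work is different.
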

\begin{proof}
	Let $x,y,z\in R$ be such that $(xyz)^n=0$, where $n$ is a positive integer. Then, $\underbrace{ (xyz)(xyz)...(xyz)}_\text{$n$-times}r^n=0$ for any $r\in R$. So, $xyzr\underbrace{(xyz)(xyz)....(xyz)}_\text{$n-1$-times}r^{n-1}\in N(R)$, which leads to  $yxzr\underbrace{(xyz)(xyz)...(xyz)}_\text{$n-1$-times}r^{n-1}\in N(R)$. This further implies that $xyzryxzr\underbrace{(xyz)(xyz)...(xyz)}_\text{$n-2$-times}r^{n-2}\in N(R)$. So, we obtain \\ $yxzryxzr\underbrace{(xyz)(xyz)...(xyz)}_\text{$n-2$-times}r^{n-2}\in N(R)$. Proceeding similarly, we obtain that $(yxzr)^n\in N(R)$. So, $yxz\in J(R)$. Therefore, by Lemma \ref{rp}, $R$ is NJ-reflexive.
\end{proof}

\begin{corollary}
	Nil-semicommutative rings are  NJ-reflexive. Thereby, semicommutative rings are NJ-reflexive.
\end{corollary}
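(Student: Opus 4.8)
The plan is to prove the first assertion directly from the definition of NJ-reflexivity, and then to obtain the second assertion from the standard fact that every semicommutative ring is nil-semicommutative. So let $R$ be nil-semicommutative and suppose $xRy\subseteq N(R)$ for some $x,y\in R$; I must show $yRx\subseteq J(R)$. Fix $r\in R$; the goal is $yrx\in J(R)$, and for this it suffices to prove that $t(yrx)\in N(R)$ for every $t\in R$, since then each $1-t(yrx)$ is a unit and the left-quasiregularity characterization of the Jacobson radical gives $yrx\in J(R)$.

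The key computation uses two elementary facts: first, that $ab\in N(R)$ if and only if $ba\in N(R)$ for any $a,b\in R$ (immediate from $(ba)^{k+1}=b(ab)^k a$); and second, the defining insertion property of nil-semicommutativity. Fix $t\in R$. From the hypothesis $xRy\subseteq N(R)$ we have $xty=x(ty)\in N(R)$, hence $tyx=(ty)x\in N(R)$ by the first fact. Now $(ty)x\in N(R)$, so nil-semicommutativity applied to the pair $ty$ and $x$ yields $(ty)\,r\,x\in N(R)$, that is, $tyrx\in N(R)$. As $t$ was arbitrary, $t(yrx)\in N(R)$ for all $t$, so $yrx\in J(R)$; and since $r$ was arbitrary, $yRx\subseteq J(R)$, proving $R$ is NJ-reflexive. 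For the second statement, I would recall that a semicommutative ring satisfies the iterated insertion property, so $(ab)^n=0$ forces $(arb)^n=0$ for every $r$ (insert $r$ between each $a$ and the following $b$ in the zero product $(ab)^n$); hence $ab\in N(R)$ implies $aRb\subseteq N(R)$, i.e.\ semicommutative rings are nil-semicommutative, and the first part applies.

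The main subtlety, and the reason I would argue directly rather than invoke Theorem \ref{bac}, is that nil-semicommutativity does \emph{not} supply the hypothesis of that theorem. The only operations available from nil-semicommutativity, namely the flip $ab\in N(R)\Leftrightarrow ba\in N(R)$ and the insertion $ab\in N(R)\Rightarrow arb\in N(R)$, produce cyclic (orientation-preserving) rearrangements of a given nilpotent product together with extra inserted factors; they can never transpose two of the original factors. Thus from $xyz\in N(R)$ one cannot in general reach $yxz\in N(R)$, nor even $yxz\in J(R)$ as in Lemma \ref{rp}, since the reduction of the latter to $tyxz\in N(R)$ again asks for an orientation-reversing rearrangement. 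The point of the argument above is that the full hypothesis $xRy\subseteq N(R)$, which grants nilpotency for \emph{all} middle terms rather than for the single word $xyz$, furnishes the correctly oriented starting element $xty\in N(R)$, from which the desired $tyrx\in N(R)$ is reached by one flip followed by one insertion.
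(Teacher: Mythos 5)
Your proof of the corollary itself is correct, and it takes a genuinely different route from the paper's. You argue straight from the definition of NJ-reflexivity: from $xRy\subseteq N(R)$ you extract $xty\in N(R)$ for the particular $t$ at hand, flip it to $(ty)x\in N(R)$, apply one insertion to get $tyrx\in N(R)$, and conclude $yrx\in J(R)$ from the quasi-regularity description of the Jacobson radical. This is clean, uses the full strength of the hypothesis $xRy\subseteq N(R)$, and bypasses the machinery of Lemma \ref{rp} and Theorem \ref{bac} entirely. The paper instead verifies the hypothesis of Theorem \ref{bac}: from $xyz\in N(R)$ it derives $xyxz\in N(R)$ and then $(yxz)^2\in N(R)$, so that $yxz\in N(R)$, and Theorem \ref{bac} finishes. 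Your handling of the second assertion (semicommutative implies nil-semicommutative via iterated insertion into $(ab)^n=0$) is also fine and is left implicit in the paper.

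However, your closing paragraph contains a false claim: you assert that nil-semicommutativity cannot yield $yxz\in N(R)$ from $xyz\in N(R)$, on the grounds that flips and insertions preserve the cyclic orientation of the original three factors. That ``invariant'' is not actually an invariant, because inserted factors may themselves be chosen to be copies of $x$, $y$, $z$, and those copies can then play the role of the original letters in a different cyclic order. Concretely: $xyz\in N(R)$ gives $xzyz\in N(R)$ (insert $z$ into the pair $x\cdot(yz)$), then $xzyxz\in N(R)$ (insert $x$ into $(xzy)\cdot z$), then $zyxzx\in N(R)$ (flip $x\cdot(zyxz)$), then $zyxzyx\in N(R)$ (insert $y$ into $(zyxz)\cdot x$), then $(yxz)^2=yxzyxz\in N(R)$ (flip $z\cdot(yxzyx)$), whence $yxz\in N(R)$. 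So nil-semicommutative rings do satisfy the hypothesis of Theorem \ref{bac}, and the paper's route is available after all; your argument is a valid alternative, not a necessary repair.
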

\begin{proof}
	Let $x,y,z\in R$ be such that $xyz\in N(R)$. Since $R$ is nil-semicommutative, $xyxz\in N(R)$. Similarly, we obtain $(yxz)^2\in N(R)$. So, by Theorem \ref{bac}, $R$ is NJ-reflexive.  
\end{proof}

\begin{theorem}\label{qnjs}
	
	Generalized weakly symmetric rings in which the index of nilpotent elements is at most 2 are NJ-reflexive.

\end{theorem}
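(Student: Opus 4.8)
The plan is to verify the hypothesis of Theorem \ref{bac}, namely that $abc\in N(R)$ forces $bac\in N(R)$ for all $a,b,c\in R$; once this is in hand, Theorem \ref{bac} delivers NJ-reflexivity at once. The conceptually cleanest route I see is to first prove that the hypotheses force $R$ to be an NI ring, i.e. that $N(R)$ is a two-sided ideal, and then to pass to the reduced quotient $R/N(R)$, where the swap $abc\mapsto bac$ is free of charge.

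To show that $N(R)$ is an ideal I would exploit the index restriction to manufacture genuine zero products and feed them to the generalized weakly symmetric hypothesis (which fires only on products that are exactly zero, but returns membership in $N(R)$). For $a,b\in N(R)$ we have $a^2=b^2=0$, so $baab=b(a^2)b=0$; applying the defining implication to the factorization $(ba)(a)(b)=0$ yields $(a)(ba)(b)=abab\in N(R)$, whence $ab\in N(R)$ because $(ab)^2\in N(R)$. The same device applied to $raar=r(a^2)r=0$ with the factorization $(ra)(a)(r)$ gives $arar=(ar)^2\in N(R)$, so $ar\in N(R)$ for every $r\in R$; combined with the elementary fact that $uv\in N(R)\iff vu\in N(R)$, this yields $aR\cup Ra\subseteq N(R)$. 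Closure under addition then follows by a short computation: since every nilpotent squares to zero, the only surviving monomials in $(a+b)^n$ are the two alternating words, and these vanish once $n$ is large enough because $(ab)^2$ and $(ba)^2$ lie in $N(R)$ and hence themselves square to zero. Thus $N(R)$ is an ideal.

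With $R$ shown to be NI, the quotient $\bar R=R/N(R)$ is reduced, and reduced rings are symmetric; therefore $\overline{abc}=0$ implies $\overline{bac}=0$. Reading this back in $R$, $abc\in N(R)$ implies $bac\in N(R)$, which is exactly the hypothesis of Theorem \ref{bac}, so $R$ is NJ-reflexive. I expect the main obstacle to be the NI step, and within it the closure of $N(R)$ under multiplication by arbitrary ring elements and under addition: the generalized weakly symmetric axiom applies only to products that are honestly zero, so the whole argument hinges on first using the index-two hypothesis to convert each nilpotency statement into a true zero product before invoking the axiom, and on carefully tracking the alternating monomials when establishing additive closure.
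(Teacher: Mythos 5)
Your proof is correct, but it takes a genuinely different route from the paper's. The paper argues by a direct string computation: starting from $xyz\in N(R)$ it uses $(xyz)^2=0$ and applies the generalized weakly symmetric implication three times to carefully chosen factorizations, eventually obtaining $zyxr\in N(R)$ for every $r$, hence $zyx\in J(R)$, and then invokes Lemmas \ref{rp} and \ref{equi}. You instead prove the structural statement that $N(R)$ is a two-sided ideal: your applications of the GWS axiom to $(ba)(a)(b)=0$ and $(ra)(a)(r)=0$ are valid (each uses only $a^2=0$), the $uv\in N(R)\Leftrightarrow vu\in N(R)$ trick correctly gives absorption on both sides, and the additive closure argument is sound since the only words in $(a+b)^n$ surviving $a^2=b^2=0$ are the alternating ones, which die once $(ab)^2=(ba)^2=0$ (so already $(a+b)^4=0$). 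Passing to the reduced quotient $R/N(R)$ and citing the classical fact that reduced rings are symmetric then yields the hypothesis of Theorem \ref{bac}. Your route buys more: it shows these rings are NI, hence (as one checks easily, or via the paper's own corollary on nil-semicommutative rings) the result could be closed off even faster; the price is reliance on the nontrivial external fact that reduced rings are symmetric, whereas the paper's computation, though opaque, is entirely self-contained.
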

\begin{proof}

	Let $xyz\in N(R)$. Then, $yxrzy(xyz)^2yxrzy=0$ for any $r\in R$. Since $R$ is GWS, $xyzyxrzyxyzyxrzy\in N(R)$. So, $xyzyxrzy\in N(R)$, that is, $yzyxrzyx\in N(R)$. So, $rz(yzyxrzyx)^2rz=0$. This implies that $yzyxrzyxrzyzyxrzyxrz\in N(R)$, that is, $(zyxr)^2zy\in N(R)$. Since $xr((zyxr)^2zy)^2xr=0$, $zyxr\in N(R)$. Hence, $zyx\in J(R)$. By Lemma \ref{rp} and Lemma \ref{equi}, $R$ is NJ-reflexive. 
\end{proof}

A ring $R$ is said to be:
\begin{enumerate}
	\item \textit{left (right) quasi-duo} (\cite{1s2id}) if every maximal left (right) ideal of $R$ is an ideal of $R$.
	\item 	\textit{left} (\textit{right}) $SF$ (\cite{sfr})   if all simple left (right) $R$-modules are flat.
\end{enumerate}

It is well known that a regular ring (that is, a ring $R$ in which for each $x\in R$, $x=xrx$ for some $r\in R$) is left-SF. However, it still remains an unsolved question whether a left SF-ring is regular.

\begin{theorem}\label{qduo}
	If  $R$ is a left (right) quasi-duo ring, then $R$  is NJ-reflexive. The converse holds if $R$ is a left (right) SF-ring.   
\end{theorem}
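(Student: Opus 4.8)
The plan is to treat the two implications separately, since they need quite different tools. For the forward implication I would use that left quasi-duo means every maximal left ideal $M$ is two-sided, so that the quotient $R/M$ is a division ring (its only left ideals, corresponding to left ideals between $M$ and $R$, are $0$ and itself). The first observation is that the image of any nilpotent in a division ring is $0$, whence $N(R)\subseteq M$ for every maximal left ideal $M$. Now suppose $xRy\subseteq N(R)$; taking the element $xy\in xRy$ gives $xy\in N(R)\subseteq M$, so $\bar x\,\bar y=0$ in the division ring $R/M$, forcing $x\in M$ or $y\in M$. Since $M$ is two-sided it absorbs multiplication on both sides, so in either case $yrx\in M$ for all $r\in R$. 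As $M$ ranges over all maximal left ideals and $J(R)$ is their intersection, I conclude $yRx\subseteq J(R)$, i.e.\ $R$ is NJ-reflexive. The right quasi-duo case is identical after replacing maximal left ideals by maximal right ideals.

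For the converse I would assume $R$ is left SF and NJ-reflexive and show every maximal left ideal is two-sided. The starting point is the flatness of each simple module $R/M$: testing flatness on principal right ideals gives $aR\cap M=aM$ for every $a$, so that each $a\in M$ satisfies $a=ab$ for some $b\in M$, equivalently $r(a)\not\subseteq M$. From this I would first extract that $R$ is semiprimitive: if $a\in J(R)$ then $a$ lies in every maximal left ideal, so $r(a)$ is contained in none of them, forcing $r(a)=R$ and hence $a=0$; thus $J(R)=0$. This semiprimitivity of SF-rings is the linchpin of the argument, and is where I expect the real content of the converse to sit.

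Next I would upgrade $J(R)=0$ to reducedness using NJ-reflexivity. If $a^2=0$ then every element of $aRa$ squares to zero, so $aRa\subseteq N(R)$; applying the definition with $x=y=a$ yields $aRa\subseteq J(R)=0$, whence $RaR$ is a nilpotent two-sided ideal, hence contained in $J(R)=0$, giving $a=0$ (a minimal-index argument removes the restriction to index $2$). So $R$ is reduced, and therefore semicommutative. I would then combine this with the SF-relation: given $a\in M$, write $a=ab$ with $b\in M$ and set $c=1-b\notin M$; reducedness turns $ac=0$ into $ca=0$, and semicommutativity into $cRa=0$. If some $ar\notin M$, then $1=t(ar)+m$ with $m\in M$, and left-multiplying by $c$ annihilates the $ctar$ term and leaves $c=cm\in M$, contradicting $c\notin M$. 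Hence $MR\subseteq M$, so $M$ is two-sided and $R$ is left quasi-duo, with the right-handed statement following symmetrically. The main obstacle throughout is the converse, and specifically the interplay in which SF-flatness supplies both the factorizations $a=ab$ and the vanishing $J(R)=0$, while NJ-reflexivity is what converts the square-zero obstructions into honest zeros.
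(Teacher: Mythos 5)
Your forward implication is correct, and it is a cleaner, more self-contained route than the paper's: you observe that in a left quasi-duo ring each maximal left ideal $M$ is two-sided with $R/M$ a division ring, so $N(R)\subseteq M$, and then $\bar x\bar y=0$ forces $x\in M$ or $y\in M$, giving $yRx\subseteq M$ and hence $yRx\subseteq J(R)$ by intersecting over all $M$. The paper instead argues by contradiction from $Ryr_0x+M=R$, writing $ryr_0(xry)r_0x=(1-m)^2$ and invoking Yu's lemma that $N(R)\subseteq J(R)$ in quasi-duo rings; your version essentially re-derives that lemma transparently. Both are fine.

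The converse, however, has a genuine gap at the step you yourself call the linchpin: the claim that a left SF-ring is semiprimitive. Your argument is that for $a\in J(R)$ the flatness of each $R/M$ gives $r(a)\not\subseteq M$ for every maximal left ideal $M$, ``forcing $r(a)=R$.'' But $r(a)$ is a \emph{right} ideal, and a proper right ideal need not be contained in any maximal \emph{left} ideal (e.g.\ the right ideal of matrices with zero first row in $M_2(k)$ is proper yet lies in no maximal left ideal), so ``contained in no maximal left ideal'' does not imply ``equals $R$.'' What the argument actually yields is only $Rr(a)=R$, i.e.\ $1=\sum r_is_i$ with $as_i=0$, from which $a=\sum r_is_ia$ — and the elements $s_ia$ are merely square-zero, not zero. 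The paper conspicuously avoids asserting $J(R)=0$: it passes to $R/J(R)$ (which is left SF by Rege's Proposition 3.2), shows that $a_0^2\in J(R)$ with $a_0\notin J(R)$ is impossible — using NJ-reflexivity precisely to put the square-zero elements $s_ia_0$ into $J(R)$ and rule out $Rr(a_0)+J(R)=R$, then using flatness of $R/L$ for a maximal left ideal $L\supseteq Rr(a_0)+J(R)$ to finish — and concludes that $R/J(R)$ is reduced, hence strongly regular, hence $R$ is left quasi-duo. Your subsequent steps (reduced $\Rightarrow$ semicommutative, and the $c=1-b$ trick showing $MR\subseteq M$) are correct as far as they go, but they all rest on $J(R)=0$ and on reducedness of $R$ itself rather than of $R/J(R)$, so the converse as written does not go through; you would need to either prove semiprimitivity of left SF-rings by a valid argument or restructure the proof modulo $J(R)$ as the paper does.
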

\begin{proof}
	Suppose $xRy\subseteq N(R)$ for some $x,y\in R$. Assume that $yr_0x\notin J(R)$ for some $r_0\in R$. Then, $Ryr_0x+M=R$ for some maximal left ideal $M$ of $R$. Hence, $ryr_0x+m=1$ for some $r\in R, m\in M$. This implies that $ryr_o(xry)r_ox=(1-m)^2$. According to \cite[Lemma 2.3]{oqdr} and hypothesis, it follows that $xry\in J(R)\subseteq M$. Therefore, $1\in M$, which leads to a contradiction. Thus, $yRx\subseteq J(R)$. Similar proof applies if $R$ is a right quasi-duo ring.
	
	Assuming $R$ is an NJ-reflexive left SF-ring, it follows from \cite[Proposition 3.2]{rnsf} that $R/J(R)$ is left SF. Suppose there exists $a_0\in R$ such that $a_{0}^2\in J(R)$ and $a_0\notin J(R)$. If $Rr(a_0)+J(R)=R$, then $1=\sum\limits^{finite}r_is_i+j$ for $j\in J(R)$, $r_i\in R$, and $s_i\in r(a_0)$. Thus, $a_0=ja_0+\sum\limits^{finite}r_is_ia_0$. Note that $s_ia_0Rs_ia_0\subseteq N(R)$ and $R$ being NJ-reflexive implies that $s_ia_0Rs_ia_0\subseteq J(R)$. This yieds that $s_ia_0\in J(R)$. So, $a_0=ja_0+\sum\limits^{finite}r_is_ia_0\in J(R)$. But this is a contradiction. Therefore, $Rr(a_0)+J(R)\neq R$. Thus,  there is a maximal left ideal $L$ such that $Rr(a_0)+J(R)\subseteq L$. Since $a_0^2\in L$, by \cite[Lemma 3.14]{rnsf}, there exists $x\in L$ such that $a_0^2=a_0^2x$. It gives that $a_0-a_0x\in r(a_0)\subseteq L$, which further implies that $a_0\in L$. Thus, there exists $y\in L$ such that $a_0=a_0y$. That is, $1-y\in r(a_0)\subseteq L$. However, this implies that $1\in L$, which is a contradiction. Hence, $R/J(R)$ is reduced (that is,  only $0$ is the nilpotent element). Hence, $R/J(R)$ is strongly regular by \cite[Remark 3.13]{rnsf}, and so $R$ is left quasi-duo.
\end{proof}

$R$ is called \textit{strongly regular} if for each $x\in R$, $x=x^2r$ for some $r\in R$. Clearly, strongly regular rings are regular.

\begin{corollary}
	If $R$ is an NJ-reflexive left SF-ring, then $R$ is strongly regular.
\end{corollary}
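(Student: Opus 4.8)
The plan is to bootstrap from Theorem \ref{qduo} rather than start from scratch. Since $R$ is NJ-reflexive and left SF, that theorem already forces $R$ to be left quasi-duo, and its proof in fact establishes the stronger fact that $R/J(R)$ is reduced, hence strongly regular by \cite[Remark 3.13]{rnsf} (using that $R/J(R)$ is left SF by \cite[Proposition 3.2]{rnsf}). Consequently the entire content of the corollary reduces to pulling strong regularity down from $R/J(R)$ to $R$, which amounts to proving $J(R)=0$; once that is secured, $R\cong R/J(R)$ is strongly regular. Equivalently, since \cite[Remark 3.13]{rnsf} asserts that a reduced left SF-ring is strongly regular, it suffices to prove that $R$ itself is reduced.

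First I would record the easy half of this, that NJ-reflexivity alone confines nilpotents to the radical. If $a^2=0$, then each $ara$ satisfies $(ara)^2=ara^2ra=0$, so $aRa\subseteq N(R)$; applying the NJ-reflexive hypothesis with $x=y=a$ gives $aRa\subseteq J(R)$, and since $J(R/J(R))=0$ makes $R/J(R)$ semiprime, this forces $a\in J(R)$. Hence $N(R)\subseteq J(R)$, and showing $R$ reduced becomes the same as showing that $J(R)$ contains no nonzero nilpotent, for which I would aim at the sharper statement $J(R)=0$. The left SF hypothesis is spent here through the flatness criterion already used in the proof of Theorem \ref{qduo}: for a maximal left ideal $L$ (two-sided, because $R$ is left quasi-duo, so that $R/L$ is a division ring) and any $b\in L$, \cite[Lemma 3.14]{rnsf} supplies $x\in L$ with $b=bx$. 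Taking $0\neq a\in J(R)$, choosing a maximal left ideal $L\supseteq l(a)$, and combining $a=ax$ with the NJ-reflexive computation from Theorem \ref{qduo} (namely $saRsa\subseteq N(R)\Rightarrow sa\in J(R)$ via semiprimeness of $R/J(R)$), I would try to drive $a$ into a relation of the form $a\in aJ(R)$ and then invoke Nakayama's lemma on the finitely generated right module $aR$ to conclude $a=0$.

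The hard part will be exactly this last promotion, and it is structural rather than computational: flatness of the simple modules only yields information modulo maximal left ideals, and every maximal left ideal contains $J(R)=\bigcap L$, so the SF criterion by itself is \emph{blind} to elements of $J(R)$. The relation $a=ax$ it produces only records annihilation of $a$ by $1-x$ with $x$ lying in one particular $L$, never in $J(R)$ itself. Turning this local data into genuine vanishing is the crux, and it is precisely what obliges me to use both the two-sidedness of maximal left ideals (quasi-duo) and the von Neumann regular, reduced structure of $R/J(R)$ obtained in Theorem \ref{qduo}, either to lift the idempotent decomposition of $R/J(R)$ or to upgrade $a=ax$ to $aR=aRJ(R)$ so that Nakayama applies. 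Once $J(R)=0$ is established, the conclusion is immediate: $R=R/J(R)$ is strongly regular, as claimed.
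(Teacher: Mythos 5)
Your reduction is sound as far as it goes: Theorem \ref{qduo} does give that $R$ is left quasi-duo and (inside its proof) that $R/J(R)$ is reduced, hence strongly regular, so the corollary would indeed follow from $J(R)=0$. But the proposal stops exactly there. You yourself flag the passage from ``$R/J(R)$ strongly regular'' to ``$J(R)=0$'' as ``the hard part,'' and you never carry it out. The flatness criterion of \cite[Lemma 3.14]{rnsf} only yields, for $a\in J(R)$ and a \emph{single} maximal left ideal $L$, an element $x\in L$ with $a=ax$; iterating this over finitely many maximal left ideals gives $a=ay$ with $y$ in their finite intersection, but never an element of $J(R)=\bigcap L$ itself. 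Consequently the relation $a\in aJ(R)$ that you would need for Nakayama (or, more simply, for $a(1-j)=0$ with $1-j$ a unit) is precisely what is not established, and neither quasi-duoness nor the idempotent structure of $R/J(R)$ is actually deployed to obtain it. As written, the argument is a strategy with its central step missing, so it does not prove the corollary.

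The paper avoids this difficulty entirely: its proof is two lines, namely that $R$ is left quasi-duo by Theorem \ref{qduo}, and then that a left quasi-duo left SF-ring is strongly regular by \cite[Theorem 4.10]{rnsf}. What you are attempting is, in effect, a re-derivation of that cited theorem of Rege, and the crux you identify is exactly the content of that theorem. To repair your write-up, either invoke \cite[Theorem 4.10]{rnsf} at the point where you currently promise a Nakayama argument, or supply a genuine proof that a left quasi-duo left SF-ring is semiprimitive; the latter is a real piece of work, not a routine verification.
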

\begin{proof}
	By Theorem \ref{qduo}, $R$ is left quasi-duo. Thus, by \cite[Theorem 4.10]{rnsf}, $R$ is strongly regular.
\end{proof}

There exists an NJ-reflexive ring which is not left (right) quasi-duo, as illustrated by the following  example.

\begin{example}\label{eqd}
	By \cite[Example 2(ii)]{1s2id}, $\mathbb{H}[x]$ is not right quasi-duo, where $\mathbb{H}$ is the Hamilton quaternion over the field of real numbers. However,
	since $\mathbb{H}[x]$ is a reduced ring, it is NJ-reflexive. 
\end{example}
$R$ is said to be \textit{J-clean} if for each $x\in R$, $x=e+j$ for some $e\in E(R)$ and $j\in J(R)$. By \cite[Corollary 4.5]{jclean}, $T_n(\mathbb{Z}_2)$ is J-clean for any positive integer $n$. 
\begin{theorem}\label{fj}
	J-clean rings are NJ-reflexive.
\end{theorem}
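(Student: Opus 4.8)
The plan is to reduce everything to the quotient ring $\bar{R} := R/J(R)$, whose structure is especially simple for a J-clean ring. First I would observe that if $R$ is J-clean then $\bar R$ is a Boolean ring: for any $x \in R$ write $x = e + j$ with $e \in E(R)$ and $j \in J(R)$, so that the image $\bar x = \bar e$ satisfies $\bar x^2 = \overline{e^2} = \bar e = \bar x$. Thus every element of $\bar R$ is idempotent, and the standard argument for Boolean rings (which I would include in a line or two: from $(\bar a + \bar a)^2 = \bar a + \bar a$ one gets characteristic $2$, and from $(\bar a + \bar b)^2 = \bar a + \bar b$ one gets $\bar a \bar b = \bar b \bar a$) shows $\bar R$ is commutative. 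In particular $\bar R$ is reduced, i.e.\ $N(\bar R) = 0$.

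With this in hand, rather than verifying the NJ-reflexive condition directly I would invoke the criterion of Lemma \ref{rp}: it suffices to prove that $xyz \in N(R)$ forces $yxz \in J(R)$ for all $x, y, z \in R$. So suppose $xyz \in N(R)$. Passing to $\bar R$, the image $\overline{xyz} = \bar x\, \bar y\, \bar z$ is nilpotent, and since $\bar R$ is reduced it must be zero. Because $\bar R$ is commutative we may freely permute the factors, giving $\overline{yxz} = \bar y\, \bar x\, \bar z = \bar x\, \bar y\, \bar z = 0$, that is $yxz \in J(R)$. By Lemma \ref{rp}, $R$ is NJ-reflexive.

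The only real content is the first step, namely recognising that the J-clean hypothesis collapses $R/J(R)$ to a Boolean (hence commutative, reduced) ring, and this is where I would focus the write-up. Once the quotient is known to be commutative and reduced, the implication $xyz \in N(R) \Rightarrow yxz \in J(R)$ is automatic, so I do not anticipate any genuine obstacle; the argument never manipulates nilpotents inside $R$ itself, only their (vanishing) images in $\bar R$. The one point to state carefully is the passage of nilpotency to the quotient, but this is immediate, since a nilpotent element of $R$ always maps to a nilpotent element of $R/J(R)$.
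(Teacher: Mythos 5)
Your proof is correct, but it takes a genuinely different route from the paper's. The paper works entirely inside $R$: given $xRy\subseteq N(R)$, it writes $yrx=e+j$ with $e\in E(R)$, $j\in J(R)$, expands $j^2$, first deduces $xy\in J(R)$ (via the observation that the idempotent in the clean decomposition of the unit $1-xy$ must be $1$), and then chases the resulting memberships to force $e=0$, so $yrx=j\in J(R)$. You instead extract the single structural fact that J-cleanness makes $\bar R=R/J(R)$ Boolean (every $\bar x$ equals the image of an idempotent), hence commutative and reduced, so $xyz\in N(R)$ gives $\overline{xyz}=0$ and then $\overline{yxz}=0$, i.e.\ $yxz\in J(R)$, and Lemma \ref{rp} finishes. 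Your argument is shorter and proves strictly more: it verifies the stronger hypothesis of Lemma \ref{rp} rather than just NJ-reflexivity, and the same observation (that $R/J(R)$ is commutative, so every maximal one-sided ideal contains $J(R)$ and is two-sided) shows that J-clean rings are in fact left and right quasi-duo, so the theorem would also follow from Theorem \ref{qduo}. What the paper's computational proof buys is self-containedness at the level of single elements, without appealing to the Boolean-quotient characterization; what yours buys is transparency and a stronger conclusion. One small presentational point: when you permute $\bar y\,\bar x\,\bar z=\bar x\,\bar y\,\bar z$ you only need commutativity of $\bar R$, which you have established, so there is no gap.
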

\begin{proof}
	Assume $x, y\in R$ satisfy $xRy\subseteq N(R)$. As $R$ is J-clean, for any $r\in R$, $yrx-e=j$ for some $e\in E(R)$ and $j\in J(R)$. Observe that $j^2=yr(xy)rx-yrxe-ej$. Since $xy\in N(R)$, $1-xy\in U(R)$. There exists $e_1\in E(R)$ such that $1-xy-e_1\in J(R)$. So, $(1-xy)^{-1}e_1\in U(R)$ which further implies that $e_1\in U(R)$, that is, $e_1=1$. Therefore, $xy\in J(R)$ which yields that $yrxe\in J(R)$ (as $ j^2=yr(xy)rx-yrxe-ej$). As $yrxe-e=je$, we have $e\in J(R)\cap E(R)$. It follows that $e=0$. Since $yrx-e=j$, $yrx=j\in J(R)$. Hence $yRx\subseteq J(R)$.
\end{proof}
Any field $F$ with at least three element is not J-clean. Therefore, the converse of  Theorem \ref{fj} is not true. 

\quad Let $ME_l(R)=\{e\in E(R):Re$ is a~minimal~left~ideal~of~$R\}$. An element $e\in E(R)$ is called \textit{left (right) semicentral} if $xe=exe (ex=exe)$ for any $x\in R$. Then, $R$ is said to be:
\begin{enumerate}
	\item \textit{left min-abel} (\cite{cnil}) if every element of $ME_l(R)$ is left  semicentral in $R$.
	\item left $MC2$ (\cite{cnil}) if $aRe=0$ implies $eRa=0$ for any $a\in R,e\in ME_l(R)$.
\end{enumerate}

\begin{lemma}\label{min}
	NJ-reflexive rings are left-min abel.
\end{lemma}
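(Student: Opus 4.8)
The plan is to fix an idempotent $e\in ME_l(R)$ and prove that it is left semicentral, i.e.\ that $(1-e)xe=0$ for every $x\in R$; this is exactly the condition defining left min-abel. The entire argument will revolve around the single element $t:=(1-e)xe$. First I would record the elementary identities coming from $e^2=e$: since $e(1-e)=0$ we get $t^2=(1-e)x\,e(1-e)\,xe=0$, and likewise $et=0$ while $te=t$. In particular $t$ is nilpotent, and $Rt$ is a left ideal contained in $Re$ (every element $rt=r(1-e)xe$ lies in $Re$).

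Next I would invoke the minimality of $Re$. Because $Rt\subseteq Re$ and $Re$ is a minimal left ideal, the dichotomy $Rt=0$ or $Rt=Re$ holds. If $Rt=0$, then $t=1\cdot t=0$ and we are finished at once, so all the content is in the second case.

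Assume therefore $Rt=Re$. Then $e\in Rt$, so there is $s\in R$ with $st=e$, whence $tst=t(st)=te=t$. On the other hand, because $t^2=0$ one checks that $(trt)^2=0$ for every $r\in R$ (the two inner $t$-factors collide and give $t^2=0$), so $tRt\subseteq N(R)$. Applying the NJ-reflexive property to the product $tRt$ (that is, taking both distinguished elements equal to $t$) yields $tRt\subseteq J(R)$; in particular $t=tst\in J(R)$. But then $Re=Rt\subseteq J(R)$, forcing the idempotent $e$ into $J(R)$. Since $Re$ is a nonzero minimal left ideal we have $e\neq 0$, and $J(R)$ contains no nonzero idempotent, a contradiction. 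Hence the second case cannot occur, $t=0$, and $e$ is left semicentral, so $R$ is left min-abel.

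The identities $t^2=et=0$ and $te=t$ are immediate bookkeeping; the one place needing a genuine idea is the second case. The point I would stress is that $t^2=0$ is precisely what makes every element of $tRt$ square-zero, so that NJ-reflexivity can be applied \emph{with both arguments equal to $t$}; this is what pushes $t$ into $J(R)$, after which the minimality of $Re$ converts it into the forbidden conclusion $e\in J(R)\cap E(R)$. I expect recognizing this self-application of NJ-reflexivity, paired with the minimality dichotomy, to be the main (and essentially the only) obstacle.
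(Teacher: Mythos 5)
Your proof is correct and follows essentially the same route as the paper's: both work with the square-zero element $t=(1-e)ae$, observe $tRt\subseteq N(R)$, apply NJ-reflexivity to get $tRt\subseteq J(R)$, and derive a contradiction from the minimality of $Re$ and the absence of nonzero idempotents in $J(R)$. The only cosmetic difference is that the paper extracts $t\in J(R)$ from $tRt\subseteq J(R)$ via the semiprimeness of $J(R)$, whereas you obtain it directly by writing $e=st$ and $t=tst\in tRt$ in the nontrivial case of the minimality dichotomy.
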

\begin{proof}

	Assume $e \in M E_l(R)$ and $a\in R$. Let $x = ae - eae$. Then $ex = 0$, $xe = x$ and $x^2 = 0$. Note that $xRx \subseteq N(R)$. As $R$ is NJ-reflexive, we have $xRx \subseteq J(R)$. Since $J(R)$ is semiprime, we have $x\in J(R)$. If $x=0$, then we are done. Otherwise, since $Re$ is a minimal left ideal of $R$, we have $Re = Rx$. As $x \in J(R)$, we have $Re = Rx \subseteq J(R)$, which implies $e=0$, a contradiction. Thus, $R$ is left min-abel.
\end{proof}
A left $R$-module $M$ is called \textit{Wnil-injective} (\cite{nir}) if for each $a(\neq 0)\in N(R)$, there is a positive integer $k$ satisfying $a^k\neq 0$ and each left $R$-homomorphism from $Ra^k$ to $M$ can be extended to one from $R$ to $M$. 

\quad While it is clear that reduced rings are NJ-reflexive, the reverse statement does not hold in general (for example, $T_2(\mathbb{Z}_2)$). Therefore, it is worthwhile to investigate some conditions under which an NJ-reflexive ring is necessarily reduced.

\begin{proposition}
	If $R$ is an NJ-reflexive ring, then each of the following conditions imply that $R$ is reduced:
	\begin{enumerate}
		\item R is semiprimitive.
		\item  R is a left MC2  ring, and each simple singular left $R$ module is Wnil-injective.
		
	\end{enumerate}
\end{proposition}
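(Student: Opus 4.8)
The plan is to prove both parts by contradiction, reducing each to the nonexistence of a nonzero square-zero element. If $R$ were not reduced I would take $b\neq 0$ with $b^{n}=0$, $b^{n-1}\neq 0$ ($n\ge 2$) and set $a=b^{n-1}$, so that $a\neq 0$, $a^{2}=0$, and hence $a\in l(a)$ with $l(a)\neq R$. The single computation $(ara)^{2}=ar\,a^{2}\,ra=0$ shows $aRa\subseteq N(R)$ for every such $a$, so NJ-reflexivity gives $aRa\subseteq J(R)$; this observation drives both arguments.

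For (1), semiprimitivity means $J(R)=0$, so the above yields $aRa\subseteq J(R)=0$. Since $J(R)=0$ forces $R$ to be semiprime (the prime radical lies inside $J(R)$), the standard semiprimeness criterion $xRx=0\Rightarrow x=0$ gives $a=0$, a contradiction. Hence $N(R)=0$.

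For (2), I would fix a maximal left ideal $M\supseteq l(a)$ (possible as $l(a)$ is proper) and split on whether $M$ is essential. If $M$ is \emph{not} essential, then $R=Re\oplus M$ with $Re$ minimal, so $e\in ME_{l}(R)$, $e\neq 0$ and $M=R(1-e)=l(e)$. From $a\in l(a)\subseteq l(e)$ I get $ae=0$; since NJ-reflexive rings are left min-abel (Lemma \ref{min}), $e$ is left semicentral, whence $are=a(ere)=(ae)re=0$, i.e.\ $aRe=0$. Left MC2 then gives $eRa=0$, in particular $ea=0$, so $e\in l(a)\subseteq l(e)$ forces $e^{2}=0$ and $e=0$, a contradiction.

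If $M$ \emph{is} essential, then $R/M$ is a simple singular left module, hence Wnil-injective, and since $a^{2}=0$ the only admissible power is $k=1$, so every homomorphism $Ra\to R/M$ extends to $R$. Applying this to the well-defined map $xa\mapsto x+M$ produces $c\in R$ with $ac\equiv 1\pmod{M}$; writing $f=ac$ gives $1-f\in M$ and $af=a^{2}c=0$. The hard part will be converting this into a contradiction, and here NJ-reflexivity must be used a second time: from $af=0$ one checks $(fsa)^{2}=0$, so $fRa\subseteq N(R)$ and therefore $aRf\subseteq J(R)$; as $frf=a(cr)f\in aRf$, we obtain $frf\in J(R)$ and hence $(rf)^{2}=r(frf)\in J(R)$ for all $r$. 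Modulo $J(R)$, $\overline{1-rf}$ is invertible with inverse $\overline{1+rf}$, so $1-rf\in U(R)$ for every $r$, which means $f\in J(R)$; then $m=1-f\in M$ is a unit, forcing $1\in M$. I expect this essential case to be the crux, as it is the only place where Wnil-injectivity and the radical computation must be fused, whereas the non-essential case is driven entirely by left MC2 together with left min-abelness.
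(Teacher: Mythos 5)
Your proof is correct and follows the same skeleton as the paper's: reduce to a square-zero element $a$, note $aRa\subseteq N(R)$ so NJ-reflexivity gives $aRa\subseteq J(R)$, handle (1) by semiprimeness, and handle (2) by choosing a maximal left ideal $M\supseteq l(a)$, splitting on whether $M$ is essential, and using Wnil-injectivity of $R/M$ in the essential case. You diverge in two local steps, both legitimately. In the non-essential case the paper invokes \cite[Theorem 1.8]{cnil} to conclude that the minimal idempotent $e$ is central and derives the contradiction from $ex=0$; you instead get $aRe=0$ directly from left semicentrality (Lemma \ref{min}) and then apply the left MC2 hypothesis to obtain $eRa=0$, which is more self-contained and uses only the definitions. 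In the essential case the paper finishes in one line: $aRa\subseteq J(R)$ plus semiprimeness of $R/J(R)$ gives $a\in J(R)$, hence $1-ac\in U(R)\cap M$, a contradiction. Your detour through $f=ac$, a second application of NJ-reflexivity to the pair $(f,a)$, and the explicit chain $(rf)^2\in J(R)\Rightarrow 1-rf\in U(R)\Rightarrow f\in J(R)$ is valid but longer; what it buys is that the conclusion rests on an explicit quasi-regularity computation rather than on semiprimeness of $R/J(R)$. Note that your own opening observation $aRa\subseteq J(R)$, combined with the semiprimeness argument you already used in part (1) (applied to $R/J(R)$), yields $a\in J(R)$ immediately and would collapse your essential case to the paper's.
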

\begin{proof}
	\begin{enumerate}
		\item Suppose $x^2=0$ for some $x\in R$. Since $R$ is NJ-reflexive, $xRx\subseteq J(R)$.  $J(R)$ being semiprime, $x\in J(R)=0$.
		\item \label{mc2nj}  Suppose $x^2=0$ for some $x(\neq 0)\in R$. Then, $l(x)\subseteq M$ for some maximal left ideal $M$. Assume that $M$ is not an essential left ideal. Then, $M=l(e)$ for some $e\in ME_l(R)$.  By Lemma \ref{min}, $R$ is left min-abel, and since $R$ is left $MC2$, by \cite[Theorem 1.8]{cnil}, $e\in Z(R)$. Thus, $ex=0$, which implies $e\in l(x)\subseteq M=l(e)$. But, this is a contradiction. Thus, $M$ is essential, and $R/M$ is a simple singular left $R$ module. Moreover $R/M$ is Wnil-injective (by hypothesis). Let $\sigma:Rx\rightarrow R/M$ be a left $R$-homomorphism defined as $\sigma (rx)=r+M$. Since $R/M$ is Wnil-injective,  $1-xt\in M$ for some $t\in R$. Because $R$ is NJ-reflexive and $xRx\subseteq N(R)$, we have $xRx\subseteq J(R)$, and hence $x\in J(R)$. So, $1-xt\in U(R)\cap M$, a contradiction. Therefore, $x=0$.
		
	\end{enumerate}
\end{proof}

\begin{lemma}\label{rqj}
	If $I$ is an ideal of $R$ that is contained in $J(R)$, and $R/I$ is NJ-reflexive, then $R$ is also NJ-reflexive.
\end{lemma}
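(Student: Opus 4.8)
The plan is to prove the contrapositive-free, direct statement: assuming $R/I$ is NJ-reflexive with $I \subseteq J(R)$ an ideal, show that $R$ itself is NJ-reflexive. Throughout, write $\bar{r} = r + I$ for the image in $R/I$. The key structural facts I would rely on are the standard radical-lifting identities: since $I \subseteq J(R)$, we have $J(R/I) = J(R)/I$, and moreover the preimage of a nilpotent element of $R/I$ is nilpotent modulo $I$, so nilpotence and membership in $J$ both pull back cleanly along the quotient map.

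The main steps, in order, would be as follows. First, suppose $x, y \in R$ satisfy $xRy \subseteq N(R)$; the goal is to deduce $yRx \subseteq J(R)$. Second, I would push the hypothesis into $R/I$: for any $r \in R$, $xry \in N(R)$ means $(xry)^k = 0$ for some $k$, so $(\bar{x}\,\bar{r}\,\bar{y})^k = \bar{0}$ in $R/I$, giving $\bar{x}\,\bar{r}\,\bar{y} \in N(R/I)$. Since this holds for every $r$, we get $\bar{x}(R/I)\bar{y} \subseteq N(R/I)$. Third, apply the NJ-reflexivity of $R/I$ to conclude $\bar{y}(R/I)\bar{x} \subseteq J(R/I) = J(R)/I$. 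Fourth, translate back: for each $r \in R$, $\bar{y}\,\bar{r}\,\bar{x} \in J(R)/I$ means $yrx + I \subseteq J(R)/I$, i.e. $yrx \in J(R)$ (here I use that $J(R)/I$ is exactly the image of $J(R)$ and that its preimage is $J(R)$ because $I \subseteq J(R)$). Hence $yRx \subseteq J(R)$, which is precisely the NJ-reflexive condition for $R$.

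The one point requiring genuine care — and the step I would flag as the main (if modest) obstacle — is the claim that $\bar{x}(R/I)\bar{y} \subseteq N(R/I)$ genuinely captures the hypothesis $xRy \subseteq N(R)$, and conversely that $\bar{y}\,\bar{r}\,\bar{x} \in J(R/I)$ forces $yrx \in J(R)$. The forward direction is immediate since a quotient of a nilpotent element is nilpotent. The return direction uses the nontrivial-but-standard fact that for $I \subseteq J(R)$ the radical of the quotient is $J(R)/I$ and its full preimage under the projection is $J(R)$; I would cite or quickly verify this, since it is the hinge that makes the lower radical $J$ (rather than $N$) behave correctly under lifting. No actual computation beyond raising things to powers is needed — the argument is almost entirely a diagram-chase through the quotient map, which is why the lemma is genuinely short.

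Note also that one need not invoke Lemma~\ref{rp} or Lemma~\ref{equi} here: the definition of NJ-reflexivity transfers directly, so the proof stays at the level of the original $xRy \subseteq N(R) \Rightarrow yRx \subseteq J(R)$ formulation without rerouting through the three-variable criterion.
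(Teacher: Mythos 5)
Your proposal is correct and follows essentially the same route as the paper: push $xRy\subseteq N(R)$ down to $R/I$, apply NJ-reflexivity of the quotient, and pull $\bar{y}\bar{r}\bar{x}\in J(R)/I$ back to $yrx\in J(R)$ using $I\subseteq J(R)$. The only cosmetic difference is that the paper unwinds the last step explicitly via the unit characterization of $J$ (lifting $\bar{u}$ with $(\bar 1-\bar y\bar r\bar x\bar z)\bar u=\bar 1$ and using that $1-(1-yrxz)u\in I\subseteq J(R)$ forces a unit), whereas you invoke the standard identity $J(R/I)=J(R)/I$ and the correspondence of preimages directly; both are fine.
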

\begin{proof}
	Let $\overline{R}=R/I$. Suppose $xRy\subseteq N(R)$ for some $x,y\in R$. It can be observed that $\bar{x}\overline{R}\bar{y}\subseteq N(\overline{R})$. Since $\overline{R}$ is NJ-reflexive, $\Bar{y}\overline{R}\Bar{x}\subseteq J(\overline{R})=J(R)/I$. Let $r$ be any element of $R$. Then, for any $z\in R$, $\Bar{1}-\Bar{y}\Bar{r}\Bar{x}\Bar{z}\in U(\overline{R})$, which means that $(\Bar{1}-\Bar{y}\Bar{r}\Bar{x}\Bar{z})\Bar{u}=\Bar{1}=\Bar{u}(\Bar{1}-\Bar{y}\Bar{r}\Bar{x}\Bar{z})$ for some $u\in R$. Therefore, $1-(1-yrxz)u\in I$. As $I\subseteq J(R)$, it follows that $(1-yrxz)u\in U(R)$. Hence, $yrx\in J(R)$, which implies that $yRx\subseteq J(R)$.
\end{proof}

We refer to \cite[Example 3]{yh} to show that the converse of Lemma \ref{rqj} does not hold.

\begin{example}
	
	Let $R$ the localization of  $\mathbb{Z}$ at $3\mathbb{Z}$, and let $S$ be the quaternions over $R$.  It is clear that $S$ is a non-commutative domain, and thus an NJ-reflexive ring. By \cite[Example 3]{yh}, we have $J(S)=3S$ and $S/3S\cong M_2(\mathbb{Z}_3)$. However, $M_2(\mathbb{Z}_3)$ is not NJ-reflexive (by Example \ref{rem}). 
	
\end{example}

\begin{theorem}\label{nilfrac}
	If $I$ is a nil ideal of $R$, then $R$ is NJ-reflexive if and only if $R/I$ is NJ-reflexive.
\end{theorem}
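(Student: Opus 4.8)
The plan is to prove both directions of the equivalence. Let $\overline{R} = R/I$ where $I$ is a nil ideal. The key structural fact I will exploit is that for a nil ideal $I$, we have $N(\overline{R}) = N(R)/I$ and $J(\overline{R}) = J(R)/I$; the latter holds because any nil ideal is contained in $J(R)$ (a standard fact), so $I \subseteq J(R)$.

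For the backward direction, suppose $\overline{R}$ is NJ-reflexive. Since $I$ is nil, it is contained in $J(R)$, so this is immediate from Lemma \ref{rqj}, which says precisely that if $I \subseteq J(R)$ and $R/I$ is NJ-reflexive, then $R$ is NJ-reflexive. So this direction requires no new work.

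For the forward direction, suppose $R$ is NJ-reflexive and I must show $\overline{R}$ is NJ-reflexive. Take $\bar{x}, \bar{y} \in \overline{R}$ with $\bar{x}\,\overline{R}\,\bar{y} \subseteq N(\overline{R})$. I want to conclude $\bar{y}\,\overline{R}\,\bar{x} \subseteq J(\overline{R})$. The strategy is to lift the nilpotency hypothesis back to $R$: for each $r \in R$, $\overline{xry} \in N(\overline{R}) = N(R)/I$ means $(xry)^k \in I$ for some $k$, and since $I$ is nil, $(xry)^k$ is itself nilpotent, hence $xry \in N(R)$. This shows $xRy \subseteq N(R)$ in $R$. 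Then NJ-reflexivity of $R$ gives $yRx \subseteq J(R)$, and passing to the quotient yields $\bar{y}\,\overline{R}\,\bar{x} \subseteq J(R)/I = J(\overline{R})$, as desired.

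The main obstacle, and the only step requiring care, is establishing the nilradical correspondence $N(\overline{R}) = N(R)/I$, specifically the inclusion showing that a lift of a nilpotent element modulo $I$ is nilpotent in $R$. This relies essentially on $I$ being nil rather than merely contained in $J(R)$: if $\bar{a}^k = \bar{0}$ then $a^k \in I$, and nilness of $I$ gives $(a^k)^m = 0$ for some $m$, so $a \in N(R)$. This is exactly where the hypothesis is stronger than in Lemma \ref{rqj} and is what makes the forward direction available. The identification $J(\overline{R}) = J(R)/I$ follows from the general quotient formula for the Jacobson radical together with $I \subseteq J(R)$.
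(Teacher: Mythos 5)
Your proposal is correct and follows essentially the same route as the paper: the backward direction is delegated to Lemma \ref{rqj} via $I\subseteq J(R)$, and the forward direction lifts the nilpotency condition from $R/I$ back to $R$ using that $I$ is nil, then applies NJ-reflexivity of $R$ and the identification $J(R)/I=J(R/I)$. You merely spell out the lifting step $(xry)^k\in I$ nil $\Rightarrow xry\in N(R)$ that the paper leaves implicit.
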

\begin{proof}

	Assume that $R$ is NJ-reflexive and $I$ is a nil ideal of $R$. We  write $\overline{R}=R/I$. Let $\Bar{x}, \Bar{y}\in \overline{R}$ such that $\Bar{x}\overline{R}\Bar{y}\subseteq N(\overline{R})$. It follows that $xRy\subseteq N(R)$. Since $R$ is NJ-reflexive, $yRx\subseteq J(R)$. Therefore, for any $r\in R$, we have $\Bar{y}\Bar{r}\Bar{x}\in J(R)/I=J(\overline{R})$, which implies that $\Bar{y}\overline{R}\Bar{x}\subseteq J(\overline{R})$.
	
	Suppose that $R/I$ is NJ-reflexive, where $I$ is a nil ideal of $R$. By Lemma $\ref{rqj}$, it follows that $R$ is  NJ-reflexive. 
\end{proof}
\begin{corollary}
	Let $R$ be a ring with nil Jacobson radical. Then, $R$ is NJ-reflexive if and only if $R/J(R)$ is NJ-reflexive. 
	
\end{corollary}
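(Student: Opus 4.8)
The plan is to apply Theorem \ref{nilfrac} directly, taking the nil ideal $I$ to be $J(R)$ itself. The only thing to verify is that the hypotheses of that theorem are met. Recall that $J(R)$ is always a two-sided ideal of $R$, so the sole extra requirement of Theorem \ref{nilfrac} is that the chosen ideal be nil. But the assumption ``$R$ has nil Jacobson radical'' says precisely that every element of $J(R)$ is nilpotent, i.e.\ $J(R)\subseteq N(R)$, so $J(R)$ is a nil ideal. Thus $I=J(R)$ is a legitimate choice in Theorem \ref{nilfrac}.

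With that identification in hand, the conclusion is immediate: substituting $I=J(R)$ into the statement of Theorem \ref{nilfrac} yields that $R$ is NJ-reflexive if and only if $R/J(R)$ is NJ-reflexive, which is exactly the assertion of the corollary. I anticipate no genuine obstacle here, since all the substantive work — the passage between $R$ and a quotient by a nil ideal in both directions — has already been carried out in the proof of Theorem \ref{nilfrac} (and, for one direction, in Lemma \ref{rqj}, which supplies the implication once one knows $J(R)\subseteq J(R)$). The corollary is therefore a specialization rather than a new argument, and the write-up should consist of one or two sentences invoking Theorem \ref{nilfrac} with $I=J(R)$.
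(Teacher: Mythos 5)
Your proposal is correct and matches the paper's intent exactly: the corollary is stated immediately after Theorem \ref{nilfrac} with no separate proof, precisely because it is the specialization $I=J(R)$, which is a nil ideal by the hypothesis that the Jacobson radical is nil. Nothing further is needed.
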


\begin{proposition}\label{sdp}
	Every finite subdirect product of NJ-reflexive rings is NJ-reflexive.
\end{proposition}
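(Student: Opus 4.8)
The plan is to realize the finite subdirect product concretely via a family of ideals and then transport the defining implication of NJ-reflexivity into each factor; the only delicate point will be comparing the Jacobson radical of the whole ring with those of the factors. Concretely, I would record that $R$ being a finite subdirect product of NJ-reflexive rings means there are ideals $I_1,\dots,I_n$ of $R$ with $\bigcap_{i=1}^n I_i=0$ and each $R/I_i$ NJ-reflexive. Write $\pi_i\colon R\to R/I_i$ for the canonical surjection and $\bar{\,\cdot\,}$ for images.

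First I would take $x,y\in R$ with $xRy\subseteq N(R)$ and push this into each factor. Since $\pi_i$ is onto and nilpotents map to nilpotents, for every $i$ we get $\bar x\,(R/I_i)\,\bar y=\overline{xRy}\subseteq N(R/I_i)$. NJ-reflexivity of each $R/I_i$ then yields $\bar y\,(R/I_i)\,\bar x\subseteq J(R/I_i)$; equivalently, fixing $r\in R$ and setting $w=yrx$, we have $w+I_i\in J(R/I_i)$ for all $i$. The whole problem thus reduces to a radical-comparison statement: an element $w$ lying in $\pi_i^{-1}(J(R/I_i))$ for every $i$ must lie in $J(R)$.

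The heart of the argument is exactly this claim, and it is where $\bigcap_i I_i=0$ together with finiteness becomes indispensable. I would prove it through maximal left ideals. Let $M$ be an arbitrary maximal left ideal of $R$, and let $P=(M:R)$ denote its core, i.e.\ the largest two-sided ideal contained in $M$; equivalently $P=\mathrm{Ann}_R(R/M)$ is a (left) primitive ideal, hence prime. Now $I_1I_2\cdots I_n\subseteq\bigcap_i I_i=0\subseteq P$, so primeness (applied by a short induction on the factors) forces $I_j\subseteq P\subseteq M$ for some $j$. Since $M$ is then a maximal left ideal containing $I_j$, we have $J(R/I_j)\subseteq M/I_j$, and from $w+I_j\in J(R/I_j)$ we conclude $w\in M$. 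As $M$ was arbitrary, $w\in J(R)$.

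Combining the two steps gives $yrx\in J(R)$ for every $r\in R$, that is $yRx\subseteq J(R)$, so $R$ is NJ-reflexive. The main obstacle is precisely the radical comparison: one cannot simply assert that $J$ of a subdirect product is the intersection of the pulled-back radicals, and the naive route — trying to make $M$ comaximal with $\bigcap_i I_i$ — collapses because that intersection is $0\subseteq M$. Routing instead through the primeness of the core of $M$ and the containment $\prod_i I_i\subseteq\bigcap_i I_i=0$ is what makes the comparison go through, and it is exactly the point at which finiteness of the family is used. The nilpotent side is by contrast routine, since $N(\,\cdot\,)$ is preserved under surjections.
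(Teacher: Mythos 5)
Your proof is correct, but the key step is handled by a genuinely different mechanism than the paper's. The paper (treating the case of two ideals $L,K$ with $L\cap K=0$) stays at the level of units: from $\bar y\bar r\bar x\in J(R/L)$ and $J(R/K)$ it extracts elements $w,z$ with $1-(1-yrxt)w\in L$ and $1-(1-yrxt)z\in K$, multiplies these two expressions, and uses $LK\subseteq L\cap K=0$ to assemble an exact right inverse of $1-yrxt$, concluding $yrx\in J(R)$ directly from the quasi-invertibility characterization of the radical. You instead isolate the radical-comparison statement as a standalone claim --- $\bigcap_i\pi_i^{-1}(J(R/I_i))\subseteq J(R)$ when $\bigcap_i I_i=0$ --- and prove it structurally: the core $P$ of any maximal left ideal $M$ is a primitive, hence prime, ideal; $I_1\cdots I_n\subseteq\bigcap_i I_i=0\subseteq P$ forces some $I_j\subseteq P\subseteq M$; and then $J(R/I_j)\subseteq M/I_j$ pulls $w$ into $M$. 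Both arguments hinge on the same use of finiteness (the product of the ideals lying in their intersection), and both are sound. Your version has the advantage of handling all $n$ factors in one pass and of producing a reusable lemma about radicals of subdirect products, at the cost of invoking primeness of primitive ideals; the paper's version is more elementary and self-contained but is written only for $n=2$ and would need an induction to cover the general finite case.
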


\begin{proof}

	Let $L$ and $K$ be ideals of $R$ such that $L \cap K = 0$ and  $R/L$ and $R/K$  are NJ-reflexive. Suppose $xRy \subseteq N(R)$ for some $x,y \in R$. By hypothesis, $(y + L)(R/L)(x + L) \subseteq J(R/L)$ and $(y + K)(R/K)(x + K) \subseteq J(R/K)$. Let $r$ be an arbitrary element of $R$. For each $t \in R$, $(1 - yrxt + L) \in U(R/L)$ and $(1 - yrxt + K) \in U(R/K)$. As a consequence, we have that $1 - (1 - yrxt)w \in L$ and $1 - (1 - yrxt)z \in K$ for some $w, z \in R$. Observe that $(1 - (1 - yrxt)w)(1 - (1 - yrxt)z) \in LK\subseteq L \cap K = 0$. Hence $1 - (1 - yrxt)s = 0$ for some $s \in R$, that is, $(1 - yrxt)s = 1$. Therefore, $yRx \subseteq J(R)$.
\end{proof}

\begin{proposition}\label{intk}
	Let $K$ and $L$ be ideals of R such that $R/K$ and $R/L$ are NJ-reflexive. Then, $R/({L\cap K})$ is NJ-reflexive.
\end{proposition}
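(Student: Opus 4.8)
The plan is to reduce the statement to Proposition \ref{sdp} by passing to the quotient $\overline{R}=R/(L\cap K)$, which is exactly the ring we must show is NJ-reflexive. Set $\overline{K}=K/(L\cap K)$ and $\overline{L}=L/(L\cap K)$; by the correspondence theorem these are ideals of $\overline{R}$. The first step I would carry out is to verify that they intersect trivially, i.e.\ $\overline{K}\cap\overline{L}=0$ in $\overline{R}$. Indeed, if $k+(L\cap K)=\ell+(L\cap K)$ with $k\in K$ and $\ell\in L$, then $k-\ell\in L\cap K\subseteq L$, so $k\in L$ and hence $k\in K\cap L=L\cap K$, which forces the common class to be $0$.

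Next I would identify the second-level quotients using the third isomorphism theorem. Since $L\cap K\subseteq K$ and $L\cap K\subseteq L$, we obtain $\overline{R}/\overline{K}\cong R/K$ and $\overline{R}/\overline{L}\cong R/L$, both of which are NJ-reflexive by hypothesis. Thus $\overline{R}$ is a ring equipped with two ideals $\overline{K}$ and $\overline{L}$ satisfying $\overline{K}\cap\overline{L}=0$, and both factor rings $\overline{R}/\overline{K}$ and $\overline{R}/\overline{L}$ are NJ-reflexive.

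At this point Proposition \ref{sdp}, applied to the ring $\overline{R}$ (in the role of $R$) together with the ideals $\overline{K}$ and $\overline{L}$, immediately gives that $\overline{R}=R/(L\cap K)$ is NJ-reflexive, as required. I do not expect a genuine obstacle here: all of the substantive work—constructing, from a product of two units modulo $L$ and modulo $K$, a single unit modulo $L\cap K$ via $LK\subseteq L\cap K$—is already packaged inside Proposition \ref{sdp}, and what remains is only the bookkeeping needed to confirm that its two hypotheses (trivial intersection and NJ-reflexivity of both factors) survive the passage to $\overline{R}$. If a self-contained argument were preferred instead, one could transcribe the proof of Proposition \ref{sdp} almost verbatim, replacing each occurrence of ``$\cdots=0$'' by ``$\cdots\in L\cap K$'' and invoking $LK\subseteq L\cap K$ at the final step in place of $LK\subseteq L\cap K=0$.
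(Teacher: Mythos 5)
Your proof is correct and follows essentially the same route as the paper: the paper realizes $R/(L\cap K)$ as a subdirect product of $R/L$ and $R/K$ via the two canonical epimorphisms with trivially intersecting kernels (which are exactly your $\overline{L}$ and $\overline{K}$) and then invokes Proposition \ref{sdp}. Your version merely makes the kernel identifications and the third isomorphism theorem explicit, which is fine.
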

\begin{proof}
	
	Define $\sigma : R/(L \cap K) \rightarrow R/L$ and $\Phi : R/(L \cap K) \rightarrow R/K$ by $\sigma (r + L \cap K) = r + L$ and $\Phi(r + L \cap K) = r + K$, respectively. It is clear that both $\sigma$ and $\Phi$ are epimorphisms with $\text{ker}(\sigma) \cap \text{ker}(\Phi) = 0$. Therefore, $R/(L \cap K)$ is the subdirect product of $R/L$ and $R/K$. By Proposition \ref{sdp}, $R/(L \cap K)$ is NJ-reflexive.
\end{proof}  

\begin{proposition}\label{il}
	If $K$ and $L$ are ideals of R such that $R/K$ and $R/L$ are NJ-reflexive, then $R/{LK}$ is NJ-reflexive. 
\end{proposition}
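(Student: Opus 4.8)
The plan is to deduce this from Proposition \ref{intk} together with Theorem \ref{nilfrac}, using the fact that $LK$ sits inside $L\cap K$ with a nilpotent quotient. First I would record the basic inclusion $LK\subseteq L\cap K$: since $L$ and $K$ are ideals, $LK\subseteq LR\subseteq L$ and $LK\subseteq RK\subseteq K$, so $LK\subseteq L\cap K$. Writing $\overline{R}=R/LK$, this makes $\overline{I}:=(L\cap K)/LK$ a genuine (two-sided) ideal of $\overline{R}$, and by the third isomorphism theorem $\overline{R}/\overline{I}\cong R/(L\cap K)$.

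The crucial step is to observe that $\overline{I}$ is nil; in fact it is square-zero. For this I would note that $(L\cap K)(L\cap K)\subseteq LK$, since the left-hand factor lies in $L$ and the right-hand factor lies in $K$. Hence $(L\cap K)^2\subseteq LK$, which says precisely that $\overline{I}^{\,2}=0$ in $\overline{R}$. Thus $\overline{I}$ is a nilpotent, and in particular nil, ideal of $\overline{R}$.

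With these two facts in hand the conclusion follows quickly. By Proposition \ref{intk}, the hypotheses that $R/K$ and $R/L$ are NJ-reflexive give that $R/(L\cap K)$ is NJ-reflexive; combined with the isomorphism above, $\overline{R}/\overline{I}$ is NJ-reflexive. Since $\overline{I}$ is a nil ideal of $\overline{R}$ and $\overline{R}/\overline{I}$ is NJ-reflexive, Theorem \ref{nilfrac} yields that $\overline{R}=R/LK$ is NJ-reflexive, as desired.

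I do not expect a serious obstacle here: the only point requiring genuine care is the inclusion $(L\cap K)^2\subseteq LK$, which is what makes the intermediate quotient ideal nilpotent and hence lets Theorem \ref{nilfrac} apply. Everything else is a routine bookkeeping application of results already established in the excerpt, so the argument is essentially a short chain of reductions rather than a computation.
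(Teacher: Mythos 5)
Your proof is correct and follows essentially the same route as the paper: note $LK\subseteq L\cap K$, apply Proposition \ref{intk} to get that $R/(L\cap K)$ is NJ-reflexive, observe that $((L\cap K)/LK)^2=0$ so this quotient is a nil ideal of $R/LK$, and conclude by Theorem \ref{nilfrac}. Your justification of the inclusion $(L\cap K)^2\subseteq LK$ is a detail the paper leaves implicit, but the argument is identical.
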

\begin{proof}
	
	We can observe that $LK \subseteq L \cap K$ and $R/(L \cap K) \cong (R/LK)/((L \cap K)/LK)$. By Proposition \ref{intk}, $R/(L \cap K)$ is NJ-reflexive.  Since $((L \cap K)/LK)^2 = 0$, by Theorem \ref{nilfrac}, $R/LK$ is also NJ-reflexive.
\end{proof}

We can derive the following corollary as a consequence of Proposition \ref{il}.
\begin{corollary}
	The following are equivalent for an ideal I of R:
	\begin{enumerate}
		\item R/I is NJ-reflexive.
		\item $R/{I^n}$ is NJ-reflexive any  positive integer n.
	\end{enumerate}
\end{corollary}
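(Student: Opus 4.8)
The plan is to prove the equivalence by establishing the two implications separately, with the forward direction handled by a short induction on $n$ built on Proposition \ref{il}.

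The implication $(2)\Rightarrow(1)$ is immediate: specializing the hypothesis to the case $n=1$ gives that $R/I^{1}=R/I$ is NJ-reflexive, which is exactly $(1)$.

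For $(1)\Rightarrow(2)$ I would argue by induction on $n$. The base case $n=1$ is precisely the assumption that $R/I$ is NJ-reflexive. For the inductive step, suppose that $R/I^{k}$ is NJ-reflexive for some positive integer $k$. I then apply Proposition \ref{il} with the choices $L=I^{k}$ and $K=I$: by the inductive hypothesis $R/L=R/I^{k}$ is NJ-reflexive, and by assumption $R/K=R/I$ is NJ-reflexive, so the proposition guarantees that $R/(LK)$ is NJ-reflexive. Since $LK=I^{k}I=I^{k+1}$, this says exactly that $R/I^{k+1}$ is NJ-reflexive, which closes the induction and establishes $(2)$.

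I do not anticipate any genuine obstacle, as the statement is a direct consequence of Proposition \ref{il}; the proof is essentially bookkeeping. The only point demanding a little care is matching the roles of $K$ and $L$ in the proposition so that their product is precisely the next power of $I$, i.e.\ verifying that $I^{k}\cdot I=I^{k+1}$ and feeding these ideals into Proposition \ref{il} in the correct order. Everything else follows formally from the inductive setup.
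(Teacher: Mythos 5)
Your proof is correct and follows exactly the route the paper intends: the corollary is stated there as an immediate consequence of Proposition \ref{il}, and your induction with $L=I^{k}$, $K=I$ so that $LK=I^{k+1}$ is the standard way to flesh that out. No issues.
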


\begin{proposition}\label{dp}
	Let $\{R_{\alpha} \}_{\alpha \in \Gamma}$ be a family of rings for an index set $\Gamma$. Then $\Pi_{\alpha\in \Gamma}R_{\alpha}$ is NJ-reflexive if and only if $R_{\alpha}$ is NJ-reflexive for each $\alpha \in \Gamma$.
\end{proposition}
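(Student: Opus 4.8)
The plan is to verify both implications directly from the definition, leaning on two structural facts about the direct product $P:=\prod_{\alpha\in\Gamma}R_\alpha$: first, the standard identity that the Jacobson radical distributes over products, $J(P)=\prod_{\alpha\in\Gamma}J(R_\alpha)$; and second, the elementary observation that a tuple supported on a single coordinate $\alpha$ is nilpotent in $P$ precisely when its $\alpha$-th entry is nilpotent in $R_\alpha$ (its nilpotency index being exactly that of the entry). For each $\alpha$ I write $\pi_\alpha:P\to R_\alpha$ for the projection and $\iota_\alpha:R_\alpha\to P$ for the map placing an element in coordinate $\alpha$ with zeros elsewhere.

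For the ``if'' direction, assume every $R_\alpha$ is NJ-reflexive and suppose $xPy\subseteq N(P)$ for $x=(x_\alpha),\,y=(y_\alpha)\in P$. Fix $\alpha$ and an arbitrary $s\in R_\alpha$; testing the hypothesis against $r=\iota_\alpha(s)$ produces a tuple supported on $\alpha$ with entry $x_\alpha s y_\alpha$, which is therefore nilpotent, so $x_\alpha s y_\alpha\in N(R_\alpha)$. Hence $x_\alpha R_\alpha y_\alpha\subseteq N(R_\alpha)$, and NJ-reflexivity of $R_\alpha$ gives $y_\alpha R_\alpha x_\alpha\subseteq J(R_\alpha)$. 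Now for any $r=(r_\alpha)\in P$, every coordinate of $yrx$ lies in $J(R_\alpha)$, so $yrx\in\prod_{\alpha}J(R_\alpha)=J(P)$; thus $yPx\subseteq J(P)$.

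For the ``only if'' direction, assume $P$ is NJ-reflexive, fix an index $\alpha$, and take $a,b\in R_\alpha$ with $aR_\alpha b\subseteq N(R_\alpha)$. Set $x=\iota_\alpha(a)$ and $y=\iota_\alpha(b)$. For any $r\in P$, the element $xry$ equals $\iota_\alpha(ar_\alpha b)$, supported on coordinate $\alpha$ with entry $ar_\alpha b\in N(R_\alpha)$, hence nilpotent; so $xPy\subseteq N(P)$. NJ-reflexivity of $P$ then yields $yPx\subseteq J(P)=\prod_{\beta}J(R_\beta)$, and reading off the $\alpha$-th coordinate after testing against $\iota_\alpha(s)$ for $s\in R_\alpha$ shows $bsa\in J(R_\alpha)$, so $bR_\alpha a\subseteq J(R_\alpha)$, proving $R_\alpha$ is NJ-reflexive.

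The one point requiring genuine care --- and the reason the argument cannot simply be read off coordinatewise --- is that the nilpotent set does not distribute over infinite products: $N(P)$ is strictly smaller than $\prod_{\alpha}N(R_\alpha)$ in general, since membership in $N(P)$ demands a single nilpotency index valid across all coordinates simultaneously. I expect this to be the main obstacle to a careless proof. It is circumvented above by only ever invoking the nilpotency hypothesis through test elements $r$ supported on a single coordinate, for which each product element $xry$ has just one nonzero (nilpotent) entry and so is genuinely nilpotent in $P$; the conclusion, by contrast, lands in $J(P)$, where the clean identity $J(P)=\prod_{\alpha}J(R_\alpha)$ makes reassembly across all coordinates immediate.
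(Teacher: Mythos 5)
Your proof is correct and follows essentially the same route as the paper: project the hypothesis $xPy\subseteq N(P)$ coordinatewise, apply NJ-reflexivity in each factor, and reassemble via $J(P)=\prod_\alpha J(R_\alpha)$, with the converse obtained by embedding along a single coordinate. You simply supply more detail than the paper (which dismisses the converse as obvious), and your observation that $N(P)\subsetneq\prod_\alpha N(R_\alpha)$ is only needed in the harmless direction is a correct and worthwhile clarification of why the coordinatewise argument goes through.
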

\begin{proof}
	Suppose that $R_\alpha$ is NJ-reflexive for each $\alpha\in \Gamma$ and $(x_i)_{i\in \Gamma}(\Pi_{\alpha\in \Gamma}R_{\alpha})(y_i)_{i\in \Gamma}\subseteq N(\Pi_{\alpha\in \Gamma}R_{\alpha})$ for some $(x_i)_{i\in \Gamma}, (y_i)_{i\in \Gamma}\in \Pi_{\alpha\in \Gamma}R_{\alpha}$. This implies that $x_\alpha R_\alpha y_\alpha\subseteq N(R_\alpha).$ By hypothesis, $y_\alpha R_\alpha x_\alpha\subseteq J(R_\alpha)$, which implies that $(y_i)_{i\in \Gamma}(\Pi_{\alpha\in \Gamma}R_{\alpha})(x_i)_{i\in \Gamma}\subseteq J(\Pi_{\alpha\in \Gamma}R_{\alpha})=\Pi_{\alpha\in \Gamma}J(R_\alpha)$. It follows that  $\Pi_{\alpha\in \Gamma}R_{\alpha}$ is NJ-reflexive. The converse is obvious.\\

\end{proof}

\begin{corollary}
	eR and $(1-e)R$  are NJ-reflexive for some central idempotent $e\in R$ if and only if $R$ is NJ-reflexive.
\end{corollary}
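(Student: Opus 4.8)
The plan is to reduce the statement to Proposition~\ref{dp} by exhibiting $R$ as a ring direct product of the two corner rings. First I would record the standard fact that a central idempotent splits $R$: when $e$ is central, both $eR$ and $(1-e)R$ are two-sided ideals, and each is a ring in its own right. Here $e$ serves as the identity of $eR$, since for every $r\in R$ we have $e\cdot er = er$ and $er\cdot e = ere = e^2 r = er$ using that $e$ is central and idempotent; symmetrically $1-e$ is the identity of $(1-e)R$. The map $r \mapsto (er,\,(1-e)r)$ is then a ring isomorphism $R \to eR \times (1-e)R$ with inverse $(a,b)\mapsto a+b$, and centrality of $e$ is precisely what makes this map multiplicative.

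Next I would apply Proposition~\ref{dp} to the two-element family $\{eR,\,(1-e)R\}$, which asserts that the product $eR \times (1-e)R$ is NJ-reflexive if and only if each of $eR$ and $(1-e)R$ is NJ-reflexive. The remaining ingredient is that NJ-reflexivity is an isomorphism invariant: the defining condition ``$xRy\subseteq N(R)$ implies $yRx\subseteq J(R)$'' is phrased purely in terms of the ring operations together with $N(\cdot)$ and $J(\cdot)$, and any ring isomorphism $\varphi$ satisfies $\varphi(N(R))=N(S)$ and $\varphi(J(R))=J(S)$ while carrying $xRy$ to $\varphi(x)S\varphi(y)$. Hence the property transfers in both directions across the isomorphism $R\cong eR\times(1-e)R$. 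Chaining these observations gives both implications of the corollary simultaneously.

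I do not anticipate any genuine obstacle: essentially all the mathematical content already resides in Proposition~\ref{dp}, and what remains is the routine verification that a central idempotent yields a ring direct-sum decomposition. The single point deserving a moment's care is that \emph{centrality} of $e$ (not merely idempotency) is what guarantees $eR$ is closed under multiplication and that the decomposition map respects products; without centrality the summands need not be subrings at all, so the reduction to Proposition~\ref{dp} would break down.
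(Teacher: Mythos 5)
Your proof is correct and follows exactly the route the paper intends: the corollary is stated as an immediate consequence of Proposition~\ref{dp} via the standard decomposition $R\cong eR\times(1-e)R$ induced by the central idempotent $e$. The paper gives no further argument, so your spelled-out verification (identity elements of the corners, multiplicativity of the splitting map, isomorphism-invariance of NJ-reflexivity) is just the routine detail the authors left implicit.
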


\begin{proposition}\label{eRe}
	R is NJ-reflexive if and only if $eRe$ is NJ-reflexive for all $e\in E(R)$.
\end{proposition}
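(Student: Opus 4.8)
The plan is to prove the nontrivial direction---that NJ-reflexivity of $R$ descends to every corner $eRe$---and to dispose of the converse in one line. For the converse, observe that $1 \in E(R)$ and $1\cdot R\cdot 1 = R$, so if $eRe$ is NJ-reflexive for every idempotent $e$, then taking $e=1$ already yields that $R$ is NJ-reflexive. Thus all the content is in the forward implication.

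For the forward direction I would first assemble the three standard facts about a corner ring $eRe$, whose identity element is $e$: multiplication in $eRe$ agrees with multiplication in $R$, so that $N(eRe)=N(R)\cap eRe$; and the Jacobson radical satisfies $J(eRe)=eJ(R)e=J(R)\cap eRe$. The inclusion $eJ(R)e\subseteq J(R)\cap eRe$ is immediate, and the reverse holds because any $z\in J(R)\cap eRe$ equals $eze\in eJ(R)e$; the remaining identity $J(eRe)=eJ(R)e$ is the well-known corner-ring radical formula, which I would invoke rather than reprove. This is the only input that is not a one-line verification, and it is the step I expect to lean on most heavily.

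Next, fix $e\in E(R)$ and $x,y\in eRe$ with $x(eRe)y\subseteq N(eRe)$. The key observation is that an arbitrary product $xry$ with $r\in R$ can be reabsorbed into the corner: since $x=exe$ and $y=eye$, we have $xry=x(ere)y$ with $ere\in eRe$, hence $xry\in x(eRe)y\subseteq N(eRe)\subseteq N(R)$. Therefore $xRy\subseteq N(R)$, and NJ-reflexivity of $R$ forces $yRx\subseteq J(R)$.

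Finally, to conclude $y(eRe)x\subseteq J(eRe)$, take any $s\in eRe$. Then $ysx\in yRx\subseteq J(R)$, while at the same time $ysx=(eye)s(exe)$ begins and ends with $e$, so $e(ysx)e=ysx$ and thus $ysx\in eRe$. Hence $ysx\in J(R)\cap eRe=J(eRe)$, which is exactly the desired conclusion, and $eRe$ is NJ-reflexive. The main obstacle here is conceptual rather than computational: the hypothesis only controls products of the form $x(eRe)y$, so the crux is the corner-absorption identity $xry=x(ere)y$ that upgrades the nilpotency hypothesis from the corner to all of $R$, paired with the radical identity $J(eRe)=J(R)\cap eRe$ that transports the resulting $J(R)$-conclusion back down into $eRe$.
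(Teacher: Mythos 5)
Your proof is correct and follows essentially the same route as the paper's: apply NJ-reflexivity of $R$ after absorbing an arbitrary $r\in R$ into the corner via $xry=x(ere)y$, then transport the conclusion back using $J(eRe)=eJ(R)e$. You simply make explicit the corner-absorption step and the identity $eJ(R)e=J(R)\cap eRe$ that the paper leaves implicit.
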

\begin{proof}
	
	Assuming that $R$ is NJ-reflexive and $(eae)(eRe)(ebe)\subseteq N({eRe})$ for some $eae, ebe\in eRe$. Then $(ebe)(eRe)(eae)\subseteq J(R)$, and due to the fact that $eJ(R)e=J(eRe)$, we have $(ebe)(eRe)(eae)\subseteq J({eRe})$. Therefore, $eRe$ is NJ-reflexive. The converse statement is self-evident and requires no further explanation.
\end{proof}

\begin{proposition}\label{upt}
	The following are equivalent:
	\begin{enumerate}
		\item $R$ is an NJ-reflexive ring.
		\item $T_n(R)$ is NJ-reflexive for any positive integer $n$. 
		
	\end{enumerate}    
\end{proposition}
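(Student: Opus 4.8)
The plan is to prove the two implications separately, treating (1)~$\Rightarrow$~(2) as the substantive direction and deducing (2)~$\Rightarrow$~(1) cheaply from Proposition~\ref{eRe}. The whole argument rests on two structural descriptions of the relevant subsets of $T_n(R)$: the Jacobson radical
\[
J(T_n(R))=\{(a_{ij})\in T_n(R): a_{ii}\in J(R)\ \text{for all } i\},
\]
and the set of nilpotent elements
\[
N(T_n(R))=\{(a_{ij})\in T_n(R): a_{ii}\in N(R)\ \text{for all } i\}.
\]
The first is the familiar computation of the radical of an upper triangular matrix ring; for the second I would observe that the diagonal of any power $A^k$ equals $(a_{11}^k,\dots,a_{nn}^k)$, so nilpotency of $A$ forces each $a_{ii}\in N(R)$, while conversely, if every $a_{ii}\in N(R)$, then $A$ is nilpotent modulo the nil ideal of strictly upper triangular matrices and hence nilpotent.

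For (1)~$\Rightarrow$~(2), assume $R$ is NJ-reflexive and take $A=(a_{ij})$, $B=(b_{ij})\in T_n(R)$ with $A\,T_n(R)\,B\subseteq N(T_n(R))$. The decisive observation is purely combinatorial: for any $C=(c_{ij})\in T_n(R)$ the $(i,i)$ entry of $ACB$ is $\sum_{j,k} a_{ij}c_{jk}b_{ki}$, and upper triangularity forces $i\le j\le k\le i$, so only the term $j=k=i$ survives and $(ACB)_{ii}=a_{ii}c_{ii}b_{ii}$. Letting $C$ range over all matrices (in particular all diagonal ones, so that $c_{ii}$ is an arbitrary element of $R$), the hypothesis $A\,T_n(R)\,B\subseteq N(T_n(R))$ becomes $a_{ii}\,R\,b_{ii}\subseteq N(R)$ for every $i$. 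NJ-reflexivity of $R$ then yields $b_{ii}\,R\,a_{ii}\subseteq J(R)$ for every $i$. Finally, for any $D=(d_{ij})\in T_n(R)$ the same diagonal computation gives $(BDA)_{ii}=b_{ii}d_{ii}a_{ii}\in J(R)$, and since $BDA$ is upper triangular this places $BDA\in J(T_n(R))$. Hence $B\,T_n(R)\,A\subseteq J(T_n(R))$, so $T_n(R)$ is NJ-reflexive.

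For (2)~$\Rightarrow$~(1), I would apply Proposition~\ref{eRe} to the ring $T_n(R)$: if $T_n(R)$ is NJ-reflexive, then $e\,T_n(R)\,e$ is NJ-reflexive for every idempotent $e$, and taking $e=E_{11}$ gives that $E_{11}\,T_n(R)\,E_{11}\cong R$ is NJ-reflexive via the isomorphism $a\mapsto aE_{11}$.

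The main obstacle is not any single hard step but rather making the two structural descriptions of $N(T_n(R))$ and $J(T_n(R))$ fully precise, especially the characterization of nilpotents, where noncommutativity could in principle cause trouble, and then pinning down the identity $(ACB)_{ii}=a_{ii}c_{ii}b_{ii}$. Once these are established, the reduction to the NJ-reflexivity of $R$ on each diagonal slot is immediate, and both the nilpotency hypothesis and the radical conclusion depend only on diagonal entries.
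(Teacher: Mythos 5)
Your proof is correct, and the converse direction (2)~$\Rightarrow$~(1) via Proposition~\ref{eRe} with $e=E_{11}$ is exactly what the paper does. For the forward direction, however, you take a genuinely different and more hands-on route. The paper argues in two lines: it lets $I$ be the nil ideal of strictly upper triangular matrices, notes $T_n(R)/I\cong R\times\dots\times R$, and then invokes Theorem~\ref{nilfrac} (NJ-reflexivity lifts and descends along nil ideals) together with Proposition~\ref{dp} (finite direct products). You instead verify the definition directly in $T_n(R)$, which requires you to establish the explicit descriptions of $N(T_n(R))$ and $J(T_n(R))$ as the upper triangular matrices with diagonal entries in $N(R)$ and $J(R)$ respectively, and the combinatorial identity $(ACB)_{ii}=a_{ii}c_{ii}b_{ii}$. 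All of these steps are sound: the nilpotency characterization follows exactly as you say (nilpotent diagonal forces $A^k$ into the strictly upper triangular ideal, which is itself nilpotent), and the index constraint $i\le j\le k\le i$ correctly isolates the diagonal term. The trade-off is that the paper's proof is shorter and reuses machinery already built (and that machinery is needed elsewhere anyway, e.g.\ in Proposition~\ref{Rn}), whereas yours is self-contained and makes transparent the underlying reason the result holds, namely that both the hypothesis and the conclusion of NJ-reflexivity in $T_n(R)$ are detected entirely on the diagonal. Either argument is acceptable.
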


\begin{proof}
	$\mathbf{(1)\implies (2)}.$
	Take $I=\begin{pmatrix}
		0 & a_{12}  & \dots & a_{1n} \\
		0 & 0 & \dots & a_{2n} \\
		\vdots & \vdots & \ddots & \vdots \\
		0 & 0 & \dots & 0 
	\end{pmatrix}$, which is a nil ideal of $T_n(R)$. Observe that $T_n(R)/I \cong R\times R\times \dots \times R$. By Theorem \ref{nilfrac} and Proposition \ref{dp}, $T_n(R)$ is NJ-reflexive.   \\
	Proof of $\mathbf{(2)\implies (1)}$, follows from Proposition \ref{eRe}.
	
\end{proof}

\begin{example}\label{utm}
	
	Let $D$ be a domain. By Proposition \ref{upt}, $T_2(D)$ is NJ-reflexive which is not reflexive (by \cite[Example 2.7]{rpr})

\end{example}

\quad Let $M$ be an $(R,R)$-bimodule. Then, the \textit{trivial extension} of $R$ by $M$ is defined by $T(R,M) = R \oplus M$, where the addition is the usual  and the multiplication is defined by $(r_1,s_1)(r_2,s_2) = (r_1r_2,r_1s_2+s_1r_2)$ for $s_i\in M$ and $r_i\in R$. The ring $T(R,M)$ is isomorphic to the ring $\left\{\left(\begin{array}{rr}
	t & s\\
	0 & t
\end{array}
\right) : t\in R, s\in M \right\}$, where the operations are usual matrix operations.

\begin{proposition}
	Let $M$ be an $(R,R)$-bimodule. Then the following are equivalent:
	\begin{enumerate}
		\item The trivial extension $T(R, M)$ is NJ-reflexive.
		\item $R$ is NJ-reflexive.
	\end{enumerate}
\end{proposition}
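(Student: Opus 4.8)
The plan is to reduce the statement to Theorem \ref{nilfrac} by exhibiting a suitable nil ideal of $T(R,M)$ whose associated quotient is isomorphic to $R$. The natural candidate is the ideal $I=\{(0,s):s\in M\}$, which corresponds to $0\oplus M$ in the direct-sum description of the trivial extension.

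First I would verify that $I$ is a two-sided ideal of $T(R,M)$. Using the multiplication rule $(r_1,s_1)(r_2,s_2)=(r_1r_2,\,r_1s_2+s_1r_2)$, one computes $(r,t)(0,s)=(0,rs)$ and $(0,s)(r,t)=(0,sr)$, both of which lie in $I$; closure under addition is immediate. Next I would observe that $I$ is nil; in fact $I^2=0$, since $(0,s_1)(0,s_2)=(0,0)$ by the multiplication formula. Hence $I$ is a nilpotent, and in particular nil, ideal of $T(R,M)$.

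Then I would establish the isomorphism $T(R,M)/I\cong R$. The projection $\pi:T(R,M)\to R$ given by $\pi(r,s)=r$ is a surjective ring homomorphism (this follows directly from the multiplication rule, since the first coordinate multiplies as in $R$), and its kernel is precisely $I$. Thus $T(R,M)/I\cong R$.

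Finally, I would apply Theorem \ref{nilfrac} to the nil ideal $I$ of the ring $T(R,M)$: that theorem gives that $T(R,M)$ is NJ-reflexive if and only if $T(R,M)/I$ is NJ-reflexive, and combining this with the isomorphism $T(R,M)/I\cong R$ yields the desired equivalence between $(1)$ and $(2)$. I do not anticipate any genuine obstacle here, as the argument is a routine application of the machinery already developed; the only points requiring care are the correct identification of the nil ideal $I$ and the verification that the quotient collapses exactly to $R$ (rather than to a product, as in the analogous Proposition \ref{upt} for $T_n(R)$).
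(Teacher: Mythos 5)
Your proof is correct, but it takes a genuinely different route from the paper's. The paper proves the equivalence by direct computation in $\Gamma=T(R,M)$: for $(2)\Rightarrow(1)$ it takes $X,Y\in\Gamma$ with $X\Gamma Y\subseteq N(\Gamma)$, reads off that the diagonal entries give $xRy\subseteq N(R)$, applies NJ-reflexivity of $R$ to get $yRx\subseteq J(R)$, and then uses (implicitly) the description $J(\Gamma)=J(R)\oplus M$ to conclude $Y\Gamma X\subseteq J(\Gamma)$; the converse is handled by embedding $R$ diagonally. You instead observe that $I=0\oplus M$ is a square-zero (hence nil) ideal with $T(R,M)/I\cong R$ and invoke Theorem \ref{nilfrac}, which delivers both implications simultaneously. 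Your reduction is the same technique the paper itself uses for $T_n(R)$ (Proposition \ref{upt}) and for $R_n$ (Proposition \ref{Rn}), and it is arguably cleaner here: it avoids having to identify $N(\Gamma)$ and $J(\Gamma)$ explicitly, which the paper's direct argument glosses over. The only ingredients you need to check --- that $I$ is an ideal, that $I^2=0$, and that the first-coordinate projection is a surjective ring homomorphism with kernel $I$ --- are all verified correctly in your write-up, and NJ-reflexivity obviously transfers across the isomorphism $T(R,M)/I\cong R$.
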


\begin{proof} Denote $\Gamma=T(R, M)$.\\
	Assume that $\Gamma$ is NJ-reflexive. Let $xRy\subseteq N(R)$ for some $x,y\in R$. Then  $\begin{pmatrix}
		x & 0\\
		0 & x
	\end{pmatrix}\Gamma \begin{pmatrix}
		y & 0\\
		0 & y
	\end{pmatrix}\subseteq N(\Gamma)$. Since $\Gamma$ is NJ-reflexive, we have $\begin{pmatrix}
		y & 0\\
		0 & y
	\end{pmatrix}\Gamma \begin{pmatrix}
		x & 0\\
		0 & x
	\end{pmatrix}\subseteq J(\Gamma)$. Thus, $yRx\subseteq J(R)$. \\
	Suppose that $R$ is NJ-reflexive. Let $X=\begin{pmatrix}
		x & m_1 \\
		0 & x
	\end{pmatrix}$, $Y=\begin{pmatrix}
		y & m_2 \\
		0 & y
	\end{pmatrix}\in \Gamma$ be such that $X\Gamma Y\subseteq N(\Gamma)$. Let $\alpha=\begin{pmatrix}
		r & m \\
		0 & r
	\end{pmatrix}$ be any element of $\Gamma$. As $R$ is NJ-reflexive, $yrx\in J(R)$, which implies that $yRx\subseteq J(R)$. Therefore, $Y\Gamma X\subseteq J(\Gamma )$.
\end{proof}

\begin{proposition}\label{Rn}
	The following are equivalent:
	\begin{enumerate}
		\item R is NJ-reflexive.
		\item $R_n=
		\left \lbrace
		\left(\begin{array}{lccccr}
			x & x_{12} & \dots  & x_{1(n-1)} & x_{1n}\\
			0 & x & \dots & x_{2(n-1)} & x_{2n}\\
			\vdots & \vdots &\ddots & \vdots & \vdots \\
			0 & 0 & \dots & x &x_{(n-1)n}\\
			0 & 0 & \dots & 0 & x \\
		\end{array}
		\right ):x, x_{ij}\in R, i<j\right \rbrace$ is NJ-reflexive.
	\end{enumerate}
\end{proposition}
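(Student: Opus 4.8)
The plan is to mimic the strategy used in Proposition \ref{upt}, reducing the claim to the already-established nil-ideal invariance of the NJ-reflexive property. First I would single out the subset $I\subseteq R_n$ consisting of those matrices in $R_n$ whose common diagonal entry is $0$; equivalently, $I$ is the set of strictly upper triangular matrices lying in $R_n$. I would verify that $I$ is a two-sided ideal of $R_n$: writing an arbitrary element of $R_n$ as $\diag(x,\dots,x)+A'$ with $A'$ strictly upper triangular, the products of an element of $I$ with such a matrix (on either side) remain strictly upper triangular, since scalar multiples of strictly upper triangular matrices and products of strictly upper triangular matrices are again strictly upper triangular. Moreover, every strictly upper triangular $n\times n$ matrix is nilpotent (of index at most $n$), so $I$ is in fact a nil ideal of $R_n$.

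Next I would establish the ring isomorphism $R_n/I\cong R$. Define $\pi:R_n\to R$ by sending a matrix in $R_n$ to its common diagonal entry $x$. Because the $(i,i)$ entry of a product of two upper triangular matrices is the product of their $(i,i)$ entries, one checks that $\pi$ respects multiplication; it is plainly additive and surjective, and its kernel is exactly $I$. Hence $R_n/I\cong R$.

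With these two facts in hand, the equivalence is immediate from Theorem \ref{nilfrac}: since $I$ is a nil ideal of $R_n$, the ring $R_n$ is NJ-reflexive if and only if $R_n/I$ is NJ-reflexive, and the latter is isomorphic to $R$. Therefore $R_n$ is NJ-reflexive if and only if $R$ is NJ-reflexive, which is $(1)\Leftrightarrow(2)$. The only point requiring a little care—routine rather than a genuine obstacle—is confirming that $I$ absorbs multiplication by arbitrary elements of $R_n$ on both sides; this rests on the observation that multiplying a matrix of zero diagonal by any element of $R_n$ keeps the diagonal zero and preserves strict upper triangularity. Everything else follows formally from the cited results, so I anticipate no substantive difficulty.
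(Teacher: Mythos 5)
Your proof is correct and follows essentially the same route as the paper: the same nil ideal $I$ of strictly upper triangular matrices, the same isomorphism $R_n/I\cong R$, and the same appeal to Theorem \ref{nilfrac}. The only (harmless) difference is in the direction $(2)\Rightarrow(1)$, where the paper cites Proposition \ref{eRe} while you simply use the ``only if'' half of Theorem \ref{nilfrac}; your choice is arguably cleaner, since it avoids having to produce an idempotent $e\in R_n$ with $eR_ne\cong R$.
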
 

\begin{proof}
	$\mathbf{(1)\implies (2)}$. Consider the ideal $I=\left \lbrace
	\left(\begin{array}{lccccr}
		0 & x_{12} & \dots & x_{1n}\\
		0 & 0 & \dots & x_{2n}\\
		\vdots & \vdots &\ddots & \vdots\\
		0 & 0 & \dots & 0 \\
	\end{array}
	\right ): x_{ij}\in R\right \rbrace$ of $R_n$. It is easy to see that $I$ a nil ideal and $R_n/I\cong R$. Hence, by Theorem \ref{nilfrac}, $R_n$ is NJ-reflexive.\\
	
	$\mathbf{(2)\implies (1)}$. This implication follows from Proposition \ref{eRe}.  
\end{proof}
\begin{corollary}
	The following are equivalent:
	\begin{enumerate}
		\item R is NJ-reflexive.
		\item $R[x]/<x^n>$ is NJ-reflexive for any positive integer $n$.
	\end{enumerate}
\end{corollary}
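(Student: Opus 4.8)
The plan is to realize $S := R[x]/\langle x^n\rangle$ as a ring carrying a transparent nil ideal whose quotient is $R$, and then invoke Theorem \ref{nilfrac}, which already delivers both implications at once since that theorem is an ``if and only if''. First I would write $\bar x$ for the image of $x$ in $S$ and record that $\bar x$ is central in $S$: the indeterminate is central in $R[x]$, and centrality passes to the quotient. Every element of $S$ then has a unique expression $a_0 + a_1\bar x + \dots + a_{n-1}\bar x^{\,n-1}$ with $a_i \in R$.

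Next I would set $I = \bar x S$, the set of elements with zero constant term. Because $\bar x$ is central, $I = \bar x S = S\bar x$ is a two-sided ideal, and $I$ is nil: if $u = \bar x v \in I$, then centrality of $\bar x$ gives $u^k = \bar x^{\,k} v^k$ by induction, so $u^n = \bar x^{\,n} v^n = 0$. Moreover the ``constant term'' map $S \to R$ sending $a_0 + a_1\bar x + \dots + a_{n-1}\bar x^{\,n-1}$ to $a_0$ is a surjective ring homomorphism with kernel exactly $I$, whence $S/I \cong R$.

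With these two facts in hand, Theorem \ref{nilfrac} applied to the nil ideal $I$ of $S$ yields that $S$ is NJ-reflexive if and only if $S/I \cong R$ is NJ-reflexive, which is precisely the asserted equivalence for each fixed $n$ (the case $n=1$ being the triviality $R[x]/\langle x\rangle \cong R$). Hence statement (2), read as holding for every positive integer $n$, is equivalent to statement (1).

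The step requiring the most care, and the one I would flag, is resisting the temptation to deduce the result from Proposition \ref{Rn} via an identification $R[x]/\langle x^n\rangle \cong R_n$. Such an isomorphism holds only for $n \le 2$; for $n \ge 3$ the truncated polynomial ring is the ring of upper triangular Toeplitz matrices (constant along each diagonal), a proper subring of $R_n$, whose entries above the main diagonal are otherwise unconstrained. Thus the clean route is the direct nil-ideal argument above rather than a reduction to $R_n$, and the only genuine verifications are the nilpotency of $I$ (immediate from the centrality of $\bar x$) and the quotient identification $S/I\cong R$.
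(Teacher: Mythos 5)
Your proof is correct and follows essentially the same route as the paper: you exhibit the set of elements with zero constant term as a nil ideal of $R[x]/\langle x^n\rangle$ whose quotient is isomorphic to $R$ and then invoke Theorem \ref{nilfrac}, which is exactly the content of the paper's appeal to the proof of Proposition \ref{Rn} via the upper triangular Toeplitz realization of the truncated polynomial ring. Your side remark that $R[x]/\langle x^n\rangle$ is only the Toeplitz subring of $R_n$ for $n\ge 3$ agrees with the isomorphism the paper actually displays, and using the ``if and only if'' of Theorem \ref{nilfrac} for both directions is a clean way to bypass the corner-ring step of Proposition \ref{eRe}.
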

\begin{proof}
	Observe that \\ $R[x]/<x^n>\cong  
	\left \lbrace
	\left(\begin{array}{lccccr}
		x_1 & x_{2} & x_3 & \dots & x_{n-1} & x_n\\
		0 & x_1 & x_2 & \dots &  x_{n-2} & x_{n-1}\\
		\vdots & \vdots  & \vdots & \vdots & \vdots \\
		0 & 0 & 0 & \dots & x_1 & x_2 \\
		0 & 0 & 0 & \dots & 0 & x_1
	\end{array}
	\right ):x_i\in R\right \rbrace$.
	Therefore, the proof can be derived from the proof of Proposition \ref{Rn}.
\end{proof}

Let $A$ be a ring (not necessarily with unity)  and $(R,R)$-bimodule   satisfying the conditions $(aw)r=a(wr)$, $(ar)w=a(rw)$ and $(ra)w=r(aw)$  for all $a,w\in A$ and $r\in R$. The \textit{ideal-extension}, also called \textit{Dorroh extension}, of $R$ by $A$ is defined to be the additive abelian group $I(R;A)=R\oplus A$ with multiplication given by $(r,a)(s,w)=(rs,rw+as+aw)$.

\quad Consider a ring homomorphism $\sigma: R\rightarrow R$. The ring of skew formal power series over $R$ is denoted by $R[[x,\sigma]]$, which is the ring of  all formal power series in $x$ with coefficients from $R$. The multiplication operation in $R[[x,\sigma]]$ is defined by the rule $xr=\sigma(r)x$ for all $r\in R$. It is a well-known fact that $J(R[[x,\sigma]])=J(R)+\langle x \rangle$ and $R[[x,\sigma]]$ is isomorphic to $I(R;\langle x \rangle)$. Observe that for each $a\in \langle x \rangle$, there exists $a_0\in \langle x \rangle$ satisfying $a+a_0+aa_0=0$.

\begin{proposition}\label{dor}
	Suppose that for any $a\in A$, there exists $a_0\in A$ such that $a+a_0+aa_0=0$. If R is NJ-reflexive, then the ideal extension $M=I(R;A)$ is NJ-reflexive.
	
\end{proposition}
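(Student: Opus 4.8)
The plan is to realize $M = I(R;A)$ as an extension of $R$ by an ideal sitting inside the Jacobson radical, and then invoke Lemma \ref{rqj}. First I would set $\mathcal{I} = \{(0,a) : a \in A\} \subseteq M$. A direct check using the multiplication $(r,a)(s,w) = (rs, rw + as + aw)$ shows that $\mathcal{I}$ is a two-sided ideal of $M$: indeed $(s,w)(0,a) = (0, sa + wa)$ and $(0,a)(s,w) = (0, as + aw)$ both lie in $\mathcal{I}$. Moreover the projection $(r,a) \mapsto r$ is a surjective ring homomorphism $M \to R$ with kernel $\mathcal{I}$, so $M/\mathcal{I} \cong R$, which is NJ-reflexive by hypothesis.

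The crux of the argument is to show $\mathcal{I} \subseteq J(M)$, and this is where the standing hypothesis on $A$ enters. Given $a \in A$, choose $a_0 \in A$ with $a + a_0 + aa_0 = 0$. Then, computing in $M$,
\[
(0,a) + (0,a_0) + (0,a)(0,a_0) = (0,\, a + a_0 + aa_0) = (0,0),
\]
so every element of $\mathcal{I}$ is right quasi-regular. Since $\mathcal{I}$ is in particular a right ideal all of whose elements are right quasi-regular, the standard characterization of the Jacobson radical as the largest right-quasi-regular right ideal gives $\mathcal{I} \subseteq J(M)$.

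With these two facts in hand the conclusion is immediate: $\mathcal{I}$ is an ideal of $M$ contained in $J(M)$, and $M/\mathcal{I} \cong R$ is NJ-reflexive, so Lemma \ref{rqj} yields that $M$ is NJ-reflexive. I expect the only delicate point to be the middle step: one must note that the hypothesis supplies only \emph{right} quasi-inverses for the elements of $\mathcal{I}$, and then appeal to the fact that a right-quasi-regular one-sided ideal is already contained in the radical, rather than attempting to verify two-sided quasi-regularity of each $(0,a)$ by hand. The remaining work, namely checking that $\mathcal{I}$ is an ideal and that $M/\mathcal{I} \cong R$, is routine verification from the definition of the multiplication.
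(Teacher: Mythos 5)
Your proof is correct, and it takes a cleaner route than the paper's. Both arguments hinge on the same key observation: the hypothesis on $A$ makes every element of $\mathcal{I}=\{(0,a):a\in A\}$ (right) quasi-regular, so $\mathcal{I}\subseteq J(M)$ — the paper states this as ``$(0,A)\subseteq J(M)$'' with essentially no justification, whereas you supply the standard radical-theoretic reason. Where you diverge is in how the NJ-reflexivity of $M$ is then extracted. The paper argues directly: it takes $(r_1,s_1)M(r_2,s_2)\subseteq N(M)$, pushes the condition down the projection to get $r_1Rr_2\subseteq N(R)$, applies NJ-reflexivity of $R$ to get $r_2Rr_1\subseteq J(R)$, and lifts back using $(0,A)\subseteq J(M)$ and the (again unproved) claim $(r_2xr_1,0)\in J(M)$. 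You instead observe that $M/\mathcal{I}\cong R$ and invoke Lemma \ref{rqj}, which packages exactly this descend-and-lift argument once and for all; this avoids re-deriving the lifting step and makes the proposition a one-line corollary of machinery already in the paper. The only cost of your approach is that it proves slightly less in spirit (the paper's computation also exhibits the nilpotency/radical membership of the individual components explicitly), but as a proof of the stated proposition it is complete and, if anything, tighter than the original.
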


\begin{proof}
	Assume that $R$ is NJ-reflexive. Consider $(r_1,s_1)M(r_2,s_2)\subseteq N(M)$ for some $(r_1,s_1), (r_2,s_2)\in M$. Thus, for any $(x,s)\in M$, we have $(r_1,s_1)(x,s)(r_2,s_2)\in N(M)$. Therefore, we can conclude that $(r_1xr_2,s_3)\in N(M)$ for some $s_3\in A$, implying that $r_1xr_2\in N(R)$. Since $R$ is NJ-reflexive, it follows that $r_2xr_1\in J(R)$. Moreover, note that $(r_2, s_2)(x, s)(r_1, s_1)=(r_2xr_1, s_4)$ for some $s_4\in A$. By hypothesis, $(0, A)\subseteq J(M)$. It can be easily shown that $(r_2xr_1, 0)\in J(M)$. Thus, $(r_2, s_2)(x, s)(r_1, s_1)=(r_2xr_1, s_4)\in J(M)$, which implies that $(r_2, s_2)M(r_1, s_1)\subseteq J(M)$. 
\end{proof}  

\quad As a consequence of Proposition \ref{dor}, the following corollary can be immediately deduced.

\begin{corollary}
	Let $\sigma:R\rightarrow R$ be a ring homomorphism. If  R is NJ-reflexive, then $R[[x,\sigma]]$ is NJ-reflexive.
\end{corollary}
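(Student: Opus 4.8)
The plan is to obtain this corollary as a direct application of Proposition \ref{dor}, so the entire argument reduces to recognizing $R[[x,\sigma]]$ as an ideal extension of the required type and then verifying the single hypothesis of that proposition. As recorded in the text preceding the corollary, $R$ is NJ-reflexive by assumption, and $R[[x,\sigma]]$ is isomorphic to the ideal extension $I(R;\langle x \rangle)$, where $\langle x \rangle$ is regarded as a (not necessarily unital) ring that is also an $(R,R)$-bimodule. First I would fix the notation $A=\langle x \rangle$ and record the isomorphism $R[[x,\sigma]]\cong I(R;A)$, so that establishing the NJ-reflexivity of $I(R;A)$ will suffice.

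Next I would check that $A=\langle x \rangle$ satisfies the quasi-invertibility condition demanded by Proposition \ref{dor}, namely that for every $a\in A$ there is some $a_0\in A$ with $a+a_0+aa_0=0$. This is precisely the observation isolated just before the corollary, and the reason behind it is that $\langle x \rangle\subseteq J(R[[x,\sigma]])$, since $J(R[[x,\sigma]])=J(R)+\langle x \rangle$. Consequently $1+a$ is a unit of $R[[x,\sigma]]$ for each $a\in\langle x \rangle$; writing its inverse as $1+a_0$ and expanding $(1+a)(1+a_0)=1$ yields $a+a_0+aa_0=0$, and because the inverse has the same constant term $1$, the element $a_0$ again lies in $\langle x \rangle$.

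With the hypothesis in hand, I would simply invoke Proposition \ref{dor} with $A=\langle x \rangle$: as $R$ is NJ-reflexive, the proposition gives that $M=I(R;\langle x \rangle)$ is NJ-reflexive, and transporting this conclusion along the isomorphism $R[[x,\sigma]]\cong I(R;\langle x \rangle)$ finishes the proof. I do not anticipate a serious obstacle here, since both ingredients---the isomorphism and the quasi-inverse property---are already flagged as known facts in the surrounding text; the only point that needs a line of genuine care is confirming that the quasi-inverse $a_0$ remains inside the ideal $\langle x \rangle$ rather than merely existing in the ambient ring, which is immediate from the constant-term bookkeeping above.
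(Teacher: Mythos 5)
Your proposal is correct and follows exactly the route the paper intends: the corollary is deduced from Proposition \ref{dor} by identifying $R[[x,\sigma]]$ with $I(R;\langle x\rangle)$ and using the quasi-inverse property of elements of $\langle x\rangle$, both of which the paper records just before the proposition. Your extra care in checking that the quasi-inverse $a_0$ actually lies in $\langle x\rangle$ (via the constant-term argument and $\langle x\rangle\subseteq J(R[[x,\sigma]])$) fills in a detail the paper leaves implicit, but the argument is the same.
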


It may be of interest to determine whether the polynomial ring over an NJ-reflexive ring is also NJ-reflexive. An example in this regard is provided below.

\begin{example}

	Let $R=K+yM_2(K)[[y]]$, where   $K$ is a division ring. Note that $J(R)=yM_2(K)[[y]]$, so $R$ is a local ring. By Theorem \ref{qduo}, $R$ is NJ-reflexive. We show that $R[x]$ is not NJ-reflexive.
	Suppose, to the contrary, that $R[x]$ is NJ-reflexive. Observe that $(E_{12}y)R[x](E_{12}y)\subseteq N(R[x]))$. Since $R[x]$ is NJ-reflexive,  $(E_{12}y)R[x](E_{12}y)\subseteq J(R[x])$. This implies that $E_{12}y\in J(R[x])$. If there exists a non-negative integer $i$ such that $E_{12}y^i\in N^*(R)$, then $M_2(K)y^{i+2}=(M_2(K)E_{12}M_2(K))y^{i+2}=(M_2(K)y)E_{12}y^i(M_2(K)y)\subseteq N^*(R)$.  But this is a contradiction to $y^{i+1}\in M_2(K)y^{i+2}$ being a non-nilpotent element. Thus, $E_{12}y^i\notin N^*(R)$ for all non-negative integers $i$. According to \cite[Theorem 2]{radpol}, $J(R[x])=L[x]$ for some nil ideal $L$. So $E_{12}y\notin J(R[x])$, a contradiction. Therefore, $R[x]$ is not NJ-reflexive.
\end{example}

\end{document}